\renewcommand{\S}{{\mathsf{(S)}}}
\newcommand{\T}{{\mathsf{(T)}}}
\newcommand{\Ac}{{\mathcal{A}}}
\newcommand{\Bc}{{\mathcal{B}}}
\newcommand{\Pt}{{\mathsf{Pt}}}
\newcommand{\Com}{{\mathsf{Com}}}
\newcommand{\Set}{{\mathsf{Set}}}
\newcommand{\CMon}{{\mathsf{CMon}}}
\newcommand{\Grp}{{\mathsf{Grp}}}
\newcommand{\Ab}{{\mathsf{Ab}}}
\newcommand{\Z}{{\mathsf{Z}}}
\newcommand{\ZZ}{{\mathsf{\Sigma}}}
\newcommand{\C}{\ensuremath{\mathbb{C}}}
\newcommand{\V}{\ensuremath{\mathbb{V}}}
\newcommand{\Eq}{\ensuremath{\mathsf{Eq}}}
\renewcommand{\lim}{\mathrm{lim}}
\newcommand{\colim}{\mathrm{colim}}
\renewcommand{\:}{\colon}
\title{Centrality and the commutativity of finite products with coequalisers}
\author{Michael Hoefnagel}
\address{Department of Mathematical Sciences \\ Stellenbosch University\\
	Private Bag X1 \\
	7602, Matieland\\
	South Africa  \\ \\ 
 National Institute for Theoretical and Computational Sciences (NITheCS) \\ South Africa
}
\keywords{centrality, commutativity, finite products, coequalisers, unital category}
\begin{document}

\maketitle

\begin{abstract}
We study centrality of morphisms in a setting derived from that of a pointed category in which finite products commute with coequalisers. The main results of this paper show that much of the behaviour of central morphisms for unital categories \cite{Bou96} is retained in our setting, including categories which are (weakly) unital, but also categories outside of the unital setting.
\end{abstract}

{\small\textit{2020 Mathematics Subject Classification}: 18E13, 18A30, 08B10, 18E05, 08B25.}

{\small\textit{Keywords}: centrality, commutativity, finite products, coequalisers, unital category.}

\section{Introduction} \label{section: introduction}

Recall that given small categories $\Ac,\Bc$ and a category $\C$ which has limits of shape $\Ac$ as well as colimits of shape $\Bc$, every bifunctor $F\: \Ac \times \Bc \rightarrow \C$ determines two functors $\colim_{\Bc}F \: \Ac \rightarrow \C$ and $\lim_{\Ac}F\:\Bc \rightarrow \C$ where for each object $A$ of $\Ac$ and $B$ of $\Bc$ we have $\colim_{\Bc}F(A) = \colim F(A,-)$ and $\lim_{\Ac}F(B) = \lim F(-,B)$. For morphisms $f\:A \rightarrow A'$ in $\Ac$ and $g\: B \rightarrow B'$ in $\Bc$ the corresponding natural transformations $F(f,-)\colon F(A,-) \rightarrow F(A',-)$ and $F(-,g)\colon F(-,B) \rightarrow F(-,B')$ determine the morphisms $\colim_{\Bc}F(f)$ and $\lim_{\Ac}F (g)$ respectively. These functors, in turn, determine a limit $\lim (\colim_{\Bc} F)$ and a colimit $\colim (\lim_{\Ac} F)$, and between them is a canonically induced morphism $\omega_F\: \colim (\lim_{\Ac} F) \rightarrow \lim (\colim_{\Bc} F)$. Limits of shape $\Ac$ are said to \emph{commute} with colimits of shape $\Bc$ in $\C$ if the morphism $\omega_F$ is an isomorphism for any bifunctor $F\: \Ac \times \Bc \rightarrow \C$. It is well-known, for instance, that filtered colimits commute with all finite limits in the category $\Set$ (see \cite{Mac98}). That is limits of shape $\Ac$ commute with colimits of shape $\Bc$ when $\Ac$ is finite and $\Bc$ is filtered. The larger class of sifted colimits \cite{AdamekRosicky2000, AdamekRosickyVitale2010} are such that they commute with all finite products in $\Set$, so that reflexive coequalisers, for instance, commute with finite products in $\Set$, and more generally in any variety of algebras. Arbitrary coequalisers, however, need not commute with finite products for general varieties of algebras. However, in many special classes of varieties (such as the variety of groups or monoids), we do have commutativity of finite products and arbitrary coequalisers (see \cite{Hoefnagel2019b} for examples). 

Considered as a property of a category, the commutativity of a specified limit with a specified colimit can have significant consequences for the underlying category. We give two illustrations of this: suppose that $\C$ is a category with finite products and finite coproducts and suppose that binary products commute with binary coproducts. That is, if $X_1 \xrightarrow{x_1} X \xleftarrow{x_2} X_2$ and $Y_1 \xrightarrow{y_1} Y \xleftarrow{y_2} Y_2$ are coproduct diagrams in $\C$ then the diagram
\[
X_1 \times Y_1 \xrightarrow{x_1 \times y_1} X \times Y \xleftarrow{x_2 \times y_2} X_2 \times  Y_2
\]
is a coproduct diagram in $\C$. The product of the trivial coproducts $1 \rightarrow 1 \leftarrow 0$ and $0 \rightarrow 1 \leftarrow 1$ being a coproduct gives us that $\C$ is pointed. Then, the product of the trivial coproducts $X \rightarrow X \leftarrow 0$ and $0 \rightarrow Y \leftarrow Y$ yield a biproduct of $X$ and $Y$ in $\C$, so that $\C$ is \emph{linear} (or ``half-additive'' \cite{FreydScendrov1990}). Conversely, in every linear category finite products commute with finite coproducts in $\C$.  Now consider a category $\C$ with equalisers and coequalisers, and suppose that equalisers commute with coequalisers in $\C$, i.e., that in any diagram
\[
\xymatrix{
E_1 \ar@<-.5ex>[d]_{e_1}  \ar@<.5ex>[d]^{e_2} \ar[r] & X_1 \ar@<-.5ex>[d]_{x_1} \ar@<.5ex>[d]^{x_2} \ar@<-.5ex>[r] \ar@<.5ex>[r]& Y_1 \ar@<-.5ex>[d]_{y_1} \ar@<.5ex>[d]^{y_2}\\
E_2 \ar[d] \ar[r] & X_2  \ar[d] \ar@<-.5ex>[r] \ar@<.5ex>[r]& Y_2 \ar[d] \\
U \ar[r]& V \ar@<-.5ex>[r] \ar@<.5ex>[r] & W
}
\]
where the top and middle rows are equaliser diagrams and $(e_1,x_1,y_1)$ and $(e_2,x_2,y_2)$ are natural transformations between the equaliser diagrams, if all of the columns are coequaliser diagrams then the bottom row is an equaliser diagram (where the morphisms are all the canonically induced morphisms, making bottom right square reasonably commutative). Given two parallel morphisms $f,g \colon X \rightarrow Y$ in $\C$, we form
\[
\xymatrix{
E \ar@<-.5ex>[d]_{fe} \ar@<.5ex>[d]^{ge} \ar[r]^e & X \ar@<-.5ex>[d]_{f} \ar@<.5ex>[d]^{g} \ar@<-.5ex>[r]_g \ar@<.5ex>[r]^f & Y \ar@<-.5ex>[d]_q\ar@<.5ex>[d]^q\\
Y \ar[d]_{1} \ar[r]_1 & Y  \ar[d]^q \ar@<-.5ex>[r]_q \ar@<.5ex>[r]^q & Q \ar[d]^1 \\
Y \ar[r]_q & Q \ar@<-.5ex>[r]_1 \ar@<.5ex>[r]^1 & Q
}
\]
where $q$ is a coequaliser of $f$ and $g$, and $e$ is their equaliser. Then the bottom row being an equaliser forces $q$ to be an isomorphism and hence $f = g$. Thus, the commutativity of equalisers and coequalisers in $\C$ would force $\C$ to be a preorder. Let us now consider all such commutation conditions between finite products, coproducts, equalisers and coequalisers. Representing such a condition by an edge (whose vertices are the respective limit/colimit) we have the following diagram:
\[
\xymatrix{
\text{finite products} \ar@{..}[rrrdd] \ar@{-}[dd]_{\text{finitely complete categories}} \ar@{-}[rrr]^-{\text{linear categories}}& & & \text{finite coproducts}  \ar@{-}[dd]\ar@{-}[dd]^{\text{finitely cocomplete categories}}\\
\\
\text{equalisers}  \ar@{..}[rrruu]  \ar@{-}[rrr]_{{\text{preorders}}} & & & \text{coequalisers}
}
\]
The labels of the solid arrows refer to class of categories determined by the respective commutation condition, while the dotted diagonal lines represent conditions which are dual to each other, namely, the commutation of finite products with coequalisers and of finite coproducts with equalisers. Perhaps it is surprising how common the commutativity of finite products with aribitrary coequalisers is in algebra: the categories of groups, rings, monoids, Boolean algebras, (non-empty) implication algebras, or congruence modular/distributive varieties with constants, but also categorical examples such as linear, regular unital (or strongly unital) categories \cite{Bou96} (admitting coequalisers), regular pointed majority categories \cite{Hoe18a, Hoefnagel2020a} (admitting coequalisers), as well as pointed regular Gumm or factor permutable categories (admitting coequalisers) \cite{BournGran2004, Gra04}, all provide us with examples of categories in which finite products and coequalisers commute\footnote{General varieties of universal algebras in which finite products commute with coequalisers have be syntactically described \cite{Hoefnagel2023}. This following the corresponding characterisation for pointed varieties \cite{Hoefnagel2019b}.}. Hence the question of what generally, if anything, can be said of categories in which finite products commute with coequalisers. In this paper, we focus this question to pointed categories, and show that much of the behavior of central morphisms \cite{Huq1968} in unital categories \cite{Bou96, Bou02} extends to every pointed category $\C$ in which binary products commute with coequalisers. 

Recall that two morphisms $f\colon A \rightarrow X$ and $g\colon B \rightarrow X$ in a pointed category $\C$ with binary products are said to \emph{commute} \cite{Huq1968} (or Huq-commute) if there exists a morphism $\rho\colon A \times B \rightarrow X$ such that $\rho (1_A,0) = f$ and $\rho (0,1_B) = g$, where $(1_A,0)$ and $(0,1_B)$ are the canonical product inclusions.
\[
\xymatrix{
A \ar[rd]_{f} \ar[r]^-{(1_A,0)} & A \times B \ar@{..>}[d]_-\rho & B \ar[l]_-{(0,1_B)} \ar[ld]^{g} \\
& X &
}
\]
Such a morphism $\rho$ we call a \emph{cooperator} for $f$ and $g$, following the terminology of \cite{Bou02}.
For instance, in the category $\Grp$ of groups, two subgroups $A \hookrightarrow X$ and $B \hookrightarrow X$ of a group $X$ commute if they centralise each other, i.e., $ab = ba$ for all $a \in A$ and $b\in B$.  A morphism $f\colon  A \rightarrow B$ in $\C$ is \emph{central} when it commutes with the identity $1_B$ on $B$, and an object $A$ is called \emph{commutative} if $1_A$ is central. Thus, a subgroup $A \hookrightarrow X$ is central in $\Grp$ if and only if it is a central subgroup, i.e., it is contained in the center $\Z(X)$ of $X$. Recall from \cite{Bou96} that a finitely complete pointed category $\C$ is \emph{unital} if the morphisms $A \xrightarrow{(1_A,0)} A \times B \xleftarrow{(0,1_B)} B$ are jointly strongly epimorphic for any two objects $A$ and $B$ of $\C$. In the context of a unital category the class of central morphisms $\Z(\C)$ in $\C$ forms a right ideal, and is such that between any two objects $X,Y$ in $\C$ the class of central morphisms $\Z(X,Y)$ between $X$ and $Y$ forms a commutative monoid with a canonical action on $\C(X,Y)$. This is the so-called ``additive core'' of \cite{Bou02}. In this paper, we show that this additive core is present in any pointed category $\C$ admitting  binary products and coequalisers and which commute. More generally, this is shown for any \emph{centralic} category in the sense defined in section~\ref{section: preliminaries and examples}. In particular, every unital or weakly unital category \cite{Nelson2008} is centralic, but also contexts far from unital, such as every pointed majority category \cite{Hoe18a, Hoefnagel2020a}, every pointed Gumm \cite{BournGran2004} or factor permutable category \cite{Gra04}, all provide us with examples of centralic categories. The relation of centralic categories to the commutativity of finite products with coequalisers is that a pointed category $\C$ with coequalisers, whose regular epimorphisms are stable under finite products, is centralic if and only if finite products commute with coequalisers in $\C$. For a centralic category $\C$, the full subcategory $\Com(\C)$ of commutative objects in $\C$ is linear and equivalent to the category $\CMon(\C)$ of internal commutative monoids in $\C$. Under suitable conditions, the inclusion $\Com(\C) \rightarrow \C$ has a finite product preserving left-adjoint, making the category $\Com(\C)$ a Birkhoff subcategory of $\C$. Moreover, we show that the subcategory $\Ab(\C)$ of abelian objects in $\C$ is reflective in $\Com(\C)$. 

\subsubsection*{Convention and notation}
Throughout this paper we will write $0$ for the zero object in a given pointed category $\C$, as well as for zero-morphisms in $\C$ --- provided the context is clear, and does not lead to confusion. We will also make frequent use of the language of generalised elements in what follows, and give set-theoretic arguments in proofs which involve finite limits, understanding that these arguments generalise to categories via standard techniques involving the Yoneda embedding (see Metatheorem~0.1.3 in \cite{BorceuxBourn}). We will also freely use general properties of regular categories \cite{BGO71} as can be found in \cite{Gran2021}.

\section{Centralic categories} \label{section: preliminaries and examples}

\begin{proposition} \label{proposition: reformulations of Ax1}
The following are equivalent for a pointed category with binary products:
\begin{enumerate}[(i)]
    \item for any $f\:X \times X \rightarrow Y$ we have $f(x,0) = f(x',0)$ implies $f(x,y) = f(x',y)$,
    \item for any $f\:X \times Y \rightarrow Z$ we have $f(x,0) = f(x',0)$ implies $f(x,y) = f(x',y)$,
    \item for any $f\:X \times Y \rightarrow Z$ we have $f(x,0) = f(x',0)$ and $f(0,y) = f(0,y')$ implies $f(x,y) = f(x',y')$.
\end{enumerate}
\end{proposition}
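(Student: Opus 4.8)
The three conditions are to be read in the language of generalised elements, so by the convention adopted in the paper (Metatheorem~0.1.3 in \cite{BorceuxBourn}) it suffices to argue set-theoretically, taking $x,x',y,y'$ to be generalised elements sharing a common domain. Since each successive condition is formally at least as strong as its predecessor, the natural strategy is a cyclic chain of implications; I would prove $(i)\Rightarrow(ii)\Rightarrow(iii)\Rightarrow(i)$. Two of these three links are essentially formal, and the whole weight of the statement sits in the single link $(i)\Rightarrow(ii)$.

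For $(iii)\Rightarrow(i)$: given $f\colon X\times X\to Y$ with $f(x,0)=f(x',0)$, apply $(iii)$ with the second factor taken to be $X$ and with $y'=y$; the second hypothesis $f(0,y)=f(0,y')$ then holds trivially and the conclusion of $(iii)$ reads $f(x,y)=f(x',y)$, which is exactly $(i)$. For $(ii)\Rightarrow(iii)$ I would proceed in two steps. First I observe that $(ii)$ has a mirror form: applying $(ii)$ to $h\colon Y\times X\to Z$ defined by $h=f\circ\mathrm{swap}$ (precomposition with the symmetry isomorphism $Y\times X\cong X\times Y$) yields that $f(0,y)=f(0,y')$ implies $f(x,y)=f(x,y')$. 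Then, assuming both hypotheses of $(iii)$, the original $(ii)$ gives $f(x,y)=f(x',y)$ from $f(x,0)=f(x',0)$, while the mirror form gives $f(x',y)=f(x',y')$ from $f(0,y)=f(0,y')$; chaining these two equalities produces $f(x,y)=f(x',y')$.

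The remaining link $(i)\Rightarrow(ii)$ is the one I expect to be the main obstacle, since it must pass from a hypothesis about the diagonal square $X\times X$ to one involving two genuinely distinct factors $X$ and $Y$. The plan is to collapse both factors into a single object $W:=X\times Y$ and then invoke $(i)$ for $W$. Given $g\colon X\times Y\to Z$, I would set $f=g\circ\langle p_X\circ\pi_1,\ p_Y\circ\pi_2\rangle\colon W\times W\to Z$, where $\pi_1,\pi_2\colon W\times W\to W$ are the product projections and $p_X,p_Y$ are the projections of $W=X\times Y$; on generalised elements this is $f\big((a,b),(c,d)\big)=g(a,d)$. Writing $w=(a,b)$, $w'=(a',b')$ and $v=(c,d)$, one has $f(w,0)=g(a,0)$ and $f(w,v)=g(a,d)$, so the conclusion of $(i)$ applied to $f$ states that $g(a,0)=g(a',0)$ implies $g(a,d)=g(a',d)$; identifying $a,a'$ with $x,x'$ and $d$ with $y$ gives precisely $(ii)$. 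The one point needing care is the quantification: by the universal property of the product, the generalised elements $w,w',v$ of $W$ range over all pairs over a common domain, so $x,x'$ sweep out all generalised elements of $X$ and $y$ all of $Y$, which is exactly the range demanded by $(ii)$.
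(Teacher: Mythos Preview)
Your proof is correct and follows essentially the same route as the paper's: both reduce the key implication $(i)\Rightarrow(ii)$ by packing $X$ and $Y$ into $W=X\times Y$ and applying $(i)$ to an auxiliary morphism out of $W\times W$. Your auxiliary map $((a,b),(c,d))\mapsto g(a,d)$ into $Z$ is in fact a mild simplification of the paper's pair-valued version $\alpha((a,b),(c,d))=(g(a,d),g(c,b))$ into $Z\times Z$, whose second component plays no role in reaching the conclusion; the remaining steps $(ii)\Rightarrow(iii)$ and $(iii)\Rightarrow(i)$ match the paper's argument verbatim.
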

\begin{proof}
For $(i) \implies (ii)$, consider the morphism $\alpha \: (X \times Y) \times (X \times Y) \rightarrow Z \times Z$ defined by $\alpha((x,y),(x',y')) = (f(x,y'),f(x',y))$. Then $\alpha((x,0),(0,0)) = \alpha((x',0),(0,0))$ so that by $(i)$ we get $\alpha((x,0),(0,y)) = \alpha((x',0),(0,y))$ and hence $f(x,y) = f(x',y)$. For $(ii) \implies (iii)$ we have $f(x,0) = f(x',0) \implies f(x,y) = f(x',y)$ and $f(0,y) = f(0,y') \implies f(x',y) = f(x',y')$ so that $f(x,y) = f(x',y')$. Then $(iii) \implies (i)$ is trivial.
\end{proof}
\begin{definition}
A pointed category $\C$ with finite products is called \emph{centralic} if it satisfies any one of the equivalent conditions of Proposition~\ref{proposition: reformulations of Ax1}.
\end{definition}

For a finitely complete category $\C$ we write $\Eq(f)$ for the kernel equivalence relation of a morphism $f$ in $\C$, i.e., the equivalence relation represented by the kernel pair of $f$. Then the property of a finitely complete pointed category $\C$ to be centralic may formulated with respect to effective equivalence relations: $\C$ is centralic if and only if for any effective equivalence relation $\Theta$ on a product $X \times Y$ in $\C$ we have $$(x,0)\Theta (x',0) \implies (x,y)\Theta (x',y).$$ This is a direct formulation of the diagrammatic condition of (3) of Proposition~2.9 in \cite{Hoefnagel2019b} so that the proposition below is just a reformulation of that proposition. 
\begin{proposition} \label{proposition: products commute with coequalisers iff A}
Let $\C$ be a pointed finitely complete category with coequalizers, then the following are equivalent.
\begin{enumerate}
\item Binary products commute with coequalisers in $\C$, i.e., for any two coequalizer diagrams
\[
    \xymatrix{
    C_1 \ar@<-.5ex>[r]_{v_1} \ar@<.5ex>[r]^{u_1} & X_1 \ar[r]^{q_1} & Q_1  & C_2 \ar@<-.5ex>[r]_{v_2} \ar@<.5ex>[r]^{u_2}& X_2 \ar[r]^{q_2} & Q_2,
    }
\]
in $\C$, the diagram
\[
\xymatrix{
& C_1 \times C_2 \ar@<-.5ex>[r]_-{v_1 \times v_2} \ar@<.5ex>[r]^-{u_1 \times u_2}  & X_1 \times X_2 \ar[r]^-{q_1 \times q_2} & Q_1 \times Q_2, \\
}
\]
is a coequaliser diagram.
\item For any regular epimorphism $q: X \rightarrow Y$ and any object $Z$ in $\C$, the diagram
\[
\xymatrix{
X \ar[r]^{q} \ar[d]_{(1_X, 0)} & Y \ar[d]^{(1_Y, 0)} \\
X \times Z \ar[r]_{q \times 1_Z} & Y \times Z
}
\]
is a pushout.
\item Regular epimorphisms are stable under binary products and $\C$ is centralic.
\end{enumerate}
\end{proposition}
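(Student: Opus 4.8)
The plan is to route all three implications through the pushout condition (2), which is the most tractable of the three. The single general fact I will lean on is that the pushout of a coequaliser along an arbitrary morphism is again a coequaliser: if $q$ is the coequaliser of $u,v\:C\rightarrow X$ and $q'\:X'\rightarrow Y'$ is the pushout of $q$ along some $i\:X\rightarrow X'$, then $q'$ is the coequaliser of $iu$ and $iv$ (one verifies the universal property directly). I will also use repeatedly that centrality, in the effective-equivalence-relation form recalled just before the statement, is exactly the device that promotes an equality $f(x,0)=f(x',0)$ to $f(x,y)=f(x',y)$.

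First I would establish $(1)\Rightarrow(2)$ by a degenerate specialisation. Apply (1) to the coequaliser $q$ of $u,v\:C\rightarrow X$ and to the degenerate coequaliser diagram whose parallel pair is the two equal zero morphisms $0\rightarrow Z$ (with coequaliser $1_Z$). Since $C\times 0\cong C$, the product parallel pair is $(1_X,0)u,(1_X,0)v\:C\rightarrow X\times Z$, so (1) gives that $q\times 1_Z$ is their coequaliser; by the general fact this is precisely the statement that the square of (2) is a pushout.

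Next I would prove $(2)\Rightarrow(1)$. The first step is $(2)\Rightarrow$ centralic: for $f\:X\times Y\rightarrow Z$ with $f(x,0)=f(x',0)$, let $q\:X\rightarrow Q$ be the coequaliser of the kernel pair of $f(-,0)$, so that $f(-,0)=mq$ and $q(x)=q(x')\Leftrightarrow f(x,0)=f(x',0)$; the pushout (2) applied to $q$ and $Y$ turns the cocone $(f,m)$ into $k\:Q\times Y\rightarrow Z$ with $k(q\times 1_Y)=f$, whence $f(x,y)=k(q(x),y)=f(x',y)$. With centrality in hand, the pushout square of (2) identifies $q\times 1_Z$ with the coequaliser of $(1_X,0)u,(1_X,0)v$, and centrality makes the coequalising conditions for this pair and for $u\times 1_Z,v\times 1_Z$ coincide, so $q\times 1_Z$ is also the coequaliser of $u\times 1_Z,v\times 1_Z$. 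Full two-sided commutativity then follows by a factoring argument: given $g$ coequalising $u_1\times u_2,v_1\times v_2$, substituting the zero element into each factor (so that $u_i0=v_i0=0$) produces $g(u_1(c_1),0)=g(v_1(c_1),0)$ and its mirror image, centrality spreads these to the two slice identities $g(u_1(c_1),x_2)=g(v_1(c_1),x_2)$ and $g(x_1,u_2(c_2))=g(x_1,v_2(c_2))$, and these let $g$ factor first through $1_{X_1}\times q_2$ and then through $q_1\times 1_{Q_2}$, hence through $q_1\times q_2$. I expect this last factoring to be the crux: the hypothesis only supplies the ``diagonal'' relation $g(u_1(c_1),u_2(c_2))=g(v_1(c_1),v_2(c_2))$, and it is precisely the zero-substitution together with centrality that converts it into the separate slice relations the one-sided coequalisers require.

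It then remains to fold in (3), which I would handle through the equivalence $(1)\Leftrightarrow(2)$ just obtained. For $(1)\Rightarrow(3)$, a regular epimorphism is a coequaliser, so by (1) a product of two of them is again a coequaliser and hence a regular epimorphism, giving stability, while centrality comes from $(1)\Rightarrow(2)\Rightarrow$ centralic. For $(3)\Rightarrow(2)$, stability makes $q\times 1_Z$ a regular epimorphism, thus the coequaliser of its kernel pair $\{((a,z),(a',z)):q(a)=q(a')\}$; a cocone $(g,h)$ with $g(a,0)=h(q(a))$ satisfies $q(a)=q(a')\Rightarrow g(a,0)=g(a',0)$, and centrality upgrades this to $g(a,z)=g(a',z)$, so $g$ factors through $q\times 1_Z$ and yields the comparison exhibiting the square as a pushout. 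The only genuinely delicate point in the whole argument is the zero-substitution-and-centrality manoeuvre inside $(2)\Rightarrow(1)$; every other step is either a specialisation of the commutation hypothesis or a short diagram chase.
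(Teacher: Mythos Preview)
Your argument is correct. The paper does not prove this proposition in-house: it observes that the centralic condition is a direct reformulation of condition~(3) of Proposition~2.9 in \cite{Hoefnagel2019b} and defers the full equivalence to that reference, so there is no proof here to compare against directly. Your strategy of routing everything through the pushout condition~(2), together with the zero-substitution-and-centrality manoeuvre that converts the diagonal coequalising relation $g(u_1c_1,u_2c_2)=g(v_1c_1,v_2c_2)$ into the two slice relations needed for the one-sided coequalisers, is the natural approach and works as you describe. (Incidentally, in your $(2)\Rightarrow$\,centralic step you could equally well take $q$ to be the coequaliser of $x,x'$ themselves rather than of the kernel pair of $f(-,0)$, as the paper does in the closely related Proposition~\ref{proposition: commutativity prod.coeq implies centralic}; both choices work.)
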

Since regular epimorphisms are stable under binary products in any regular category \cite{BGO71}, the following corollary is immediate.
\begin{corollary}
A pointed regular category with coequalisers $\C$ is centralic if and only if binary products commute with coequalisers.
\end{corollary}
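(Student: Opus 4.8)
The plan is to derive the corollary directly from Proposition~\ref{proposition: products commute with coequalisers iff A} by invoking the regularity hypothesis to discharge one of the two conditions in part (3) of that proposition. Recall that the proposition asserts the equivalence, for a pointed finitely complete category with coequalisers, of three conditions: (1) binary products commute with coequalisers; (2) a certain pushout condition; and (3) the conjunction ``regular epimorphisms are stable under binary products \emph{and} $\C$ is centralic''. The corollary under consideration restricts attention to pointed regular categories with coequalisers and claims the two-way equivalence: $\C$ is centralic iff binary products commute with coequalisers.

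First I would recall the standard fact, cited in the excerpt as \cite{BGO71}, that in any regular category regular epimorphisms are stable under binary products --- indeed more generally under pullback along arbitrary morphisms, and the product projections furnish the relevant pullbacks. Thus, for a \emph{regular} pointed category $\C$ with coequalisers, the first conjunct of condition (3), namely stability of regular epimorphisms under binary products, holds automatically and unconditionally. Consequently, over the class of pointed regular categories with coequalisers, condition (3) collapses to its second conjunct: condition (3) is satisfied if and only if $\C$ is centralic.

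From here the argument is purely formal. By Proposition~\ref{proposition: products commute with coequalisers iff A}, condition (1) (binary products commute with coequalisers) is equivalent to condition (3), and we have just observed that in the regular setting condition (3) is equivalent to the single statement that $\C$ is centralic. Chaining these two equivalences gives that $\C$ is centralic if and only if binary products commute with coequalisers, which is exactly the assertion of the corollary.

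There is essentially no obstacle to overcome here, since the corollary is an immediate specialisation of the preceding proposition; the only input beyond that proposition is the regularity fact about stability of regular epimorphisms, which is precisely why the hypothesis of regularity is imposed. The one point requiring a moment's care is to confirm that a pointed regular category with coequalisers does satisfy the finite completeness hypothesis presupposed by Proposition~\ref{proposition: products commute with coequalisers iff A}; but this is automatic, as every regular category is by definition finitely complete, so the proposition applies verbatim and no further verification is needed.
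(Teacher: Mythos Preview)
Your proposal is correct and follows essentially the same approach as the paper: the paper simply remarks that since regular epimorphisms are stable under binary products in any regular category, the corollary is immediate from Proposition~\ref{proposition: products commute with coequalisers iff A}. Your additional check that regular categories are finitely complete (so the proposition applies) is a reasonable piece of due diligence, but otherwise the argument is identical in content.
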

\begin{proposition} \label{proposition: commutativity prod.coeq implies centralic}
Every pointed category $\C$ admitting binary products and coequalisers which commute is centralic.
\end{proposition}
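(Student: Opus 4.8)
The plan is to verify condition (ii) of Proposition~\ref{proposition: reformulations of Ax1}. So I fix a morphism $f\: X \times Y \to Z$ together with generalised elements $x,x'\: T \to X$ and $y\: T \to Y$ satisfying $f(x,0) = f(x',0)$, and I must deduce $f(x,y) = f(x',y)$. The underlying idea is to read the hypothesis $f(x,0)=f(x',0)$ as saying that $f$ coequalises the parallel pair $(x,0),(x',0)\: T \to X \times Y$, and then to recognise --- using the assumed commutativity of products with coequalisers --- that this pair has a coequaliser of the shape $p \times 1_Y$ for a suitable regular epimorphism $p$; once $f$ is factored through $p \times 1_Y$, the desired equality becomes immediate.

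First I would form the coequaliser $p\: X \to P$ of the pair $x,x'\: T \to X$, so that $px = px'$. I then apply the assumed commutativity of binary products with coequalisers (condition (1) of Proposition~\ref{proposition: products commute with coequalisers iff A}) to the two coequaliser diagrams
\[T \rightrightarrows X \xrightarrow{p} P \qquad \text{and} \qquad 0 \rightrightarrows Y \xrightarrow{1_Y} Y,\]
the second being the coequaliser of the (necessarily equal) zero pair on $Y$, whose coequaliser is $1_Y$. The commutation then tells me that $p \times 1_Y\: X \times Y \to P \times Y$ is the coequaliser of the induced pair, which (as $0$ is terminal, so $T \times 0 \cong T$) is precisely the pair $(x,0),(x',0)\: T \to X \times Y$.

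Now the hypothesis $f(x,0)=f(x',0)$ says exactly that $f$ coequalises this pair, so by the universal property of the coequaliser $p \times 1_Y$ there is a unique $\hat f\: P \times Y \to Z$ with $f = \hat f\,(p \times 1_Y)$. Since $px = px'$, I compute $(p \times 1_Y)(x,y) = (px,y) = (px',y) = (p \times 1_Y)(x',y)$, and therefore $f(x,y) = \hat f(px,y) = \hat f(px',y) = f(x',y)$, as required.

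The main obstacle --- and the reason this needs a separate argument rather than an appeal to Proposition~\ref{proposition: products commute with coequalisers iff A} --- is that here $\C$ is assumed to have only binary products and coequalisers, with no finite completeness; in particular there are no kernel pairs, so the reformulation of centrality via effective equivalence relations is unavailable. The crux is thus to encode the hypothesis purely through coequalisers, and the decisive choice is to pair the coequaliser of $x,x'$ with the identity coequaliser on $Y$. Taking the second parallel pair to be the zero pair does two things at once: it forces the second coordinate of the induced pair to be $0$, matching the hypothesis $f(x,0)=f(x',0)$, and it forces the second coequaliser factor to be $1_Y$, so that the conclusion drops out of the single relation $px = px'$. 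After this pairing is spotted, only a routine application of the universal property remains.
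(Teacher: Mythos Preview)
Your proof is correct and follows essentially the same route as the paper's: form the coequaliser of $x,x'$, pair it with the trivial coequaliser $0 \rightrightarrows Y \xrightarrow{1_Y} Y$, invoke the commutation hypothesis to conclude that $p\times 1_Y$ is a coequaliser of $(x,0),(x',0)$, and then factor $f$ through it. The paper's argument is terser at the final step (it simply says ``which implies that $f(x,y)=f(x',y)$''), whereas you spell out the factorisation $f = \hat f\,(p\times 1_Y)$ and the computation $\hat f(px,y)=\hat f(px',y)$ explicitly; the content is the same.
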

\begin{proof}
Suppose that $f\:X \times Y \rightarrow Z$ is any morphism in $\C$. Given $x,x'\:S \rightarrow X$ such that $f(x,0) = f(x',0)$, consider a coequaliser diagram
\[
\xymatrix{
S \ar@<-.5ex>[r]_-{x} \ar@<.5ex>[r]^-{x'} & X \ar[r]^q & Q.
}
\]
Since $0 \rightrightarrows Y \xrightarrow{1_Y} Y$ is trivially a coequaliser, the diagram
\[
\xymatrix{
S  \ar@<-.5ex>[r]_-{(x,0)} \ar@<.5ex>[r]^-{(x',0)} & X \times Y  \ar[r]^{q \times 1_Y} & Q \times Y
}
\]
is a coequaliser, which implies that $f(x,y) = f(x',y)$ for any $y\:S \rightarrow Y$.
\end{proof}
\begin{proposition} \label{proposition: kernel cooperator}
If $\C$ is centralic and has kernel pairs, given morphisms $f\:X \rightarrow Y$ and $g\:X' \rightarrow Y$ which admit a cooperator $\rho\:X \times X' \rightarrow Y$, we have $\Eq(f\times g) \leqslant \Eq(\rho)$.
\end{proposition}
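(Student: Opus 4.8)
The plan is to reduce the claimed inequality of kernel equivalence relations on $X \times X'$ directly to condition (iii) of Proposition~\ref{proposition: reformulations of Ax1}, applied to the cooperator $\rho$ itself. Working with generalised elements (as licensed by the metatheorem cited in the conventions), a generalised element of $\Eq(f \times g)$ is a pair of generalised elements $(x,x'),(a,a')\: S \rightarrow X \times X'$ satisfying $(f \times g)(x,x') = (f \times g)(a,a')$, which unpacks to the two corner equations $f(x) = f(a)$ and $g(x') = g(a')$. The goal is to show that any such pair also satisfies $\rho(x,x') = \rho(a,a')$, i.e.\ that it lies in $\Eq(\rho)$.

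First I would rewrite the cooperator identities $\rho(1_X,0) = f$ and $\rho(0,1_{X'}) = g$ as the generalised-element equations $\rho(x,0) = f(x)$, $\rho(a,0) = f(a)$ and $\rho(0,x') = g(x')$, $\rho(0,a') = g(a')$. Combining these with the two corner equations above then yields $\rho(x,0) = \rho(a,0)$ and $\rho(0,x') = \rho(0,a')$.

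Next I would invoke condition (iii) of Proposition~\ref{proposition: reformulations of Ax1} for the morphism $\rho\: X \times X' \rightarrow Y$ (where the two factors of the domain play the roles of the two variables in the statement): it asserts that $\rho(u,0) = \rho(u',0)$ together with $\rho(0,v) = \rho(0,v')$ force $\rho(u,v) = \rho(u',v')$. Taking $u = x$, $u' = a$, $v = x'$, $v' = a'$, the two equations established in the previous step give $\rho(x,x') = \rho(a,a')$, which is exactly the assertion that the pair lies in $\Eq(\rho)$. Passing back from generalised elements to an inclusion of subobjects of $(X \times X') \times (X \times X')$ via Yoneda then delivers $\Eq(f \times g) \leqslant \Eq(\rho)$.

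There is essentially no hard step: the entire content is the single application of the centralic axiom to $\rho$. The only point requiring care is matching the hypotheses of condition (iii) to the available data, namely recognising that the two corner equations $f(x) = f(a)$ and $g(x') = g(a')$ are precisely, after substituting the cooperator identities, the conditions $\rho(-,0)$-constant and $\rho(0,-)$-constant demanded by the axiom; the cooperator supplies this translation for free.
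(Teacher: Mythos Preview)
Your proof is correct and is essentially identical to the paper's own argument: both unpack $(f\times g)(x,x') = (f\times g)(a,a')$ into the two corner equations, rewrite them via the cooperator identities as $\rho(x,0)=\rho(a,0)$ and $\rho(0,x')=\rho(0,a')$, and then apply condition~(iii) of Proposition~\ref{proposition: reformulations of Ax1} to $\rho$ to conclude. The paper's version is simply terser, omitting the explicit mention of the cooperator identities and the Yoneda translation back to subobjects.
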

\begin{proof}
If $(f\times g)(x,y) = (f\times g)(x',y')$ then $\rho(x,0) = \rho(x',0)$ and $\rho(0,y) = \rho(0,y')$, so that by Proposition~\ref{proposition: reformulations of Ax1} $(iii)$ we get $\rho(x,y) = \rho(x',y')$, and the result follows.
\end{proof}
\subsection{Examples of centralic categories} \label{section: examples}
A pointed finitely complete category $\C$ is \emph{weakly unital} \cite{Nelson2008} if for every two objects $X$ and $Y$ the product inclusions $$X \xrightarrow{(1_X,0)} X \times Y \xleftarrow{(0,1_Y)} Y$$ of any binary product diagram are jointly epimorphic. As an example, consider any pointed quasi-variety $\V$ of algebras which admits a binary operation $+$ satisfying $x + 0 = 0 + x$ and $x + 0 = y + 0 \implies x = y$. According to Proposition~3.2 in \cite{Gray2012} we have that $\V$ is weakly unital.
\begin{proposition} \label{proposition: example-unital}
Every weakly unital category is centralic.
\end{proposition}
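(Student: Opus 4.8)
The plan is to verify condition $(ii)$ of Proposition~\ref{proposition: reformulations of Ax1}, which is the most convenient one to connect to joint epimorphicity. So I would let $f\colon X \times Y \rightarrow Z$ be an arbitrary morphism and suppose given generalised elements $x,x'\colon S \rightarrow X$ with $f(x,0) = f(x',0)$; the goal is to show $f(x,y) = f(x',y)$ for every $y\colon S \rightarrow Y$.

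The central difficulty is that weak unitality is a statement about morphisms \emph{out of} a product, whereas the hypothesis $f(x,0)=f(x',0)$ constrains a pair of generalised elements. To bridge this gap I would first pass to the universal such pair. Writing $h = f(1_X,0)\colon X \rightarrow Z$ for the morphism $h(x) = f(x,0)$, I form the kernel pair $R \rightrightarrows X$ of $h$ with projections $p_1,p_2\colon R \rightarrow X$, so that $f(p_1,0) = f(p_2,0)$ by construction; this kernel pair exists since $\C$ is finitely complete. The claim then reduces, after precomposing with the factorisation $\langle x,x'\rangle \colon S \rightarrow R$ of $(x,x')$ through $R$, to proving the single equality $f(p_1 \pi_R, \pi_Y) = f(p_2 \pi_R, \pi_Y)$ of two parallel morphisms $R \times Y \rightarrow Z$, where $\pi_R$ and $\pi_Y$ denote the product projections of $R \times Y$.

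Now weak unitality applies directly: the inclusions $R \xrightarrow{(1_R,0)} R \times Y \xleftarrow{(0,1_Y)} Y$ are jointly epimorphic, so it suffices to check that the two morphisms above agree after composing with each inclusion. Composing with $(1_R,0)$ yields $f(p_1,0)$ and $f(p_2,0)$, which coincide precisely by the defining property of the kernel pair $R$. Composing with $(0,1_Y)$ yields $f(0,y)$ on both sides, since $p_i \circ 0 = 0$, and these trivially agree. Joint epimorphicity then forces $f(p_1 \pi_R, \pi_Y) = f(p_2 \pi_R, \pi_Y)$, and precomposing with $(\langle x,x'\rangle, y)\colon S \rightarrow R \times Y$ delivers $f(x,y) = f(x',y)$, as required.

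I expect the only genuine subtlety to lie in this reduction step via the kernel pair: the point is to recognise that the implication in $(ii)$ must be recast as an equality of two parallel maps out of $R \times Y$ before the joint-epimorphic hypothesis can be brought to bear. Once this reformulation is in place, the two inclusion-composites are immediate, the first using the defining property of $R$ and the second using that the zero morphism is absorbed by $p_1$ and $p_2$.
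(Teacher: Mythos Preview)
Your proof is correct, but it takes a more roundabout path than the paper's. You introduce the kernel pair $R$ of $h = f(1_X,0)$ in order to replace the implication in condition $(ii)$ by an equality of two parallel morphisms $R \times Y \rightarrow Z$, and then invoke joint epimorphicity of the inclusions into $R \times Y$. This works, and your verification of the two composites is fine.

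The paper avoids the kernel pair entirely. Given $x,x'\colon S \rightarrow X$ and $y\colon S \rightarrow Y$, it simply considers the two morphisms $f(x \times y), f(x' \times y)\colon S \times S \rightarrow Z$ and checks that they agree on the product inclusions $S \xrightarrow{(1_S,0)} S \times S \xleftarrow{(0,1_S)} S$: the first inclusion yields $f(x,0) = f(x',0)$ by hypothesis, the second yields $f(0,y) = f(0,y)$ trivially. Weak unitality at $S \times S$ then forces $f(x \times y) = f(x' \times y)$, and precomposing with the diagonal $(1_S,1_S)$ gives $f(x,y) = f(x',y)$. So the ``reduction step via the kernel pair'' that you flag as the only genuine subtlety is in fact unnecessary: the same manoeuvre can be performed directly over $S$. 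Your approach does have the minor conceptual advantage of isolating a single universal instance, but at the cost of invoking finite completeness (for the kernel pair) where the paper's argument needs only binary products.
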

\begin{proof}
Given $x,x'\:S \rightarrow X$, $y\:S \rightarrow Y$, and any morphism $f\:X \times Y \rightarrow Z$ with $f(x,0) = f(x',0)$ the morphisms $f(x \times y)$ and $f   (x' \times y)$ are equal when composed with the canonical product inclusions $S \xrightarrow{(1_S,0)} S \times S \xleftarrow{(0,1_S)} S$. Thus they are equal, and the result follows.
\end{proof}
The notion of a \emph{Gumm category} \cite{BournGran2004} is the categorical analogue of varieties in which Gumm's shifting lemma holds \cite{Gum83}, i.e., congruence modular varieties. A finitely complete category $\C$ is a Gumm category if for any three equivalence relations $R,S,T$ on any object $X$ in $\C$ such that $R \cap S \leqslant T$, if $(x,y),(w,z) \in R$ and $(y,z), (x,w) \in S$ and $(y,z) \in T$ then we get $(x,w) \in T$. This implication of relations between the elements above is usually depicted with a diagram
\[
\xymatrix{
y \ar@/^1.5pc/@{-}[r]^{T} \ar@{-}[r]^{S} \ar@{-}[d]_{R} & z \ar@{-}[d]^{R} \\
x\ar@/_1.5pc/@{..}[r]_{T} \ar@{-}[r]_{S} & w
}
\]
\begin{proposition} \label{proposition: example pointed gumm}
Every pointed Gumm category is centralic.
\end{proposition}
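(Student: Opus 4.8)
The plan is to reduce to the characterisation of centrality in terms of effective equivalence relations recorded just after the definition: since a pointed Gumm category $\C$ is finitely complete and pointed, it suffices to show that for every effective equivalence relation $\Theta$ on a product $X \times Y$ one has $(x,0)\,\Theta\,(x',0) \implies (x,y)\,\Theta\,(x',y)$ for all generalised elements $x,x'\:S \to X$ and $y\:S \to Y$. Reasoning with generalised elements is legitimate here by the metatheorem cited in the conventions, so I would run a set-theoretic argument on the four points $(x,0),(x',0),(x,y),(x',y)$ of $X \times Y$ and let the Yoneda/metatheorem machinery transport it to the relational statement about subobjects of $(X \times Y) \times (X \times Y)$.

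The key idea is to feed these four points into the shifting lemma. On $X \times Y$ I would take the two kernel equivalence relations of the projections, $R = \Eq(\pi_1)$ and $S = \Eq(\pi_2)$, so that $(a,b)\,R\,(a',b')$ holds iff $a = a'$ and $(a,b)\,S\,(a',b')$ holds iff $b = b'$. Their intersection is the diagonal, $R \cap S = \Delta$, and since any equivalence relation is reflexive we get $R \cap S \leqslant \Theta$ for free. Setting $T = \Theta$, this supplies exactly the hypothesis $R \cap S \leqslant T$ demanded by the Gumm condition, and it is worth stressing that this inclusion is automatic, so it is not where the work lies.

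Then I would place the four points at the corners of the shifting-lemma diagram preceding this proposition: $(x,0)$ and $(x',0)$ along the top row, with $(x,y)$ and $(x',y)$ directly beneath them along the bottom row. The vertical edges $(x,y)\,R\,(x,0)$ and $(x',y)\,R\,(x',0)$ hold because each such pair shares its first coordinate; the horizontal edges $(x,0)\,S\,(x',0)$ and $(x,y)\,S\,(x',y)$ hold because each shares its second coordinate; and the top edge $(x,0)\,\Theta\,(x',0)$ is precisely our hypothesis. With $T = \Theta$ the Gumm condition then yields the bottom edge $(x,y)\,\Theta\,(x',y)$, which is exactly the desired conclusion.

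I expect the only thing requiring care to be the bookkeeping of this configuration: checking that the projection kernel pairs genuinely occupy the roles of $R$ and $S$, and that the four corners line up so that the $R$-, $S$- and $T$-edges of the shifting lemma are the ones read off coordinatewise. Beyond this matching there is essentially no computation, since the inclusion $R \cap S \leqslant T$ is immediate and every edge is verified by inspecting a single coordinate. The one conceptual point to confirm is that the generalised-element reasoning faithfully reflects the relational statement in $\C$, which is exactly the situation the cited metatheorem covers.
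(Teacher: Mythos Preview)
Your proposal is correct and is essentially the paper's own proof: the paper likewise applies the shifting lemma on $X \times Y$ with $R = \Eq(\pi_1)$, $S = \Eq(\pi_2)$, and $T = \Eq(f)$ (your $\Theta$), observing that $R \cap S$ is the diagonal so the inclusion $R \cap S \leqslant T$ is automatic. The only cosmetic difference is that the paper works directly with the morphism formulation $f(x,0) = f(x',0) \implies f(x,y) = f(x',y)$ rather than the equivalent effective-equivalence-relation formulation you chose.
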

\begin{proof}
Given a morphism $f\:X \times Y \rightarrow Z$ such that $f(x,0) = f(x',0)$ then we have the diagram below where $\Eq(\pi_1) \cap \Eq(\pi_2) \leqslant \Eq(f)$ holds trivially.
\[
\xymatrix{
(x,0) \ar@/^1.5pc/@{-}[r]^{\Eq(f)} \ar@{-}[r]^{\Eq(\pi_2)} \ar@{-}[d]_{\Eq(\pi_1)} & (x',0) \ar@{-}[d]^{\Eq(\pi_1)} \\
(x,y) \ar@/_1.5pc/@{..}[r]_{\Eq(f)} \ar@{-}[r]_{\Eq(\pi_2)} & (x',y)
}
\]
\end{proof}
A regular category $\C$ is said to be \emph{factor permutable} \cite{Gra04} if for every object $A$ in $\C$ and any equivalence relation $E$ on $A$ we have that $\Eq(p) \circ E = E \circ \Eq(p)$ for every product projection $p\:A \rightarrow X$ of $A$. This notion is the categorical generalisation of factor permutable varieties introduced in \cite{Gum83}.
\begin{remark} \label{remark: factor permutable}
Applying  Lemma~2.5 (the weak shifting lemma) in \cite{Gra04} to the same diagram as in Proposition~\ref{proposition: example pointed gumm} it will follow that every pointed factor permutable variety is centralic. 
\end{remark}
\begin{remark}
Proposition~\ref{proposition: example pointed gumm} may also be seen as a consequence of the fact that any punctually congruence hyperextensible category in the sense of \cite{Bourn2005} is centralic. 
\end{remark}
The notion of a \emph{majority category} has been defined in the paper \cite{Hoe18a} (see also \cite{Hoefnagel2020a}). We now recall this notion for the reader's convenience. Consider the condition on a ternary relation $R$ between sets $X,Y,Z$ given by
	\[
	(x,y,z') \in R \quad \text{and} \quad (x,y',z) \in R \quad  \text{and} \quad (x',y,z) \in R  \implies (x,y,z) \in R.
	\]
	Then a category $\C$ is a \emph{majority category} if every internal ternary relation in $\C$ satisfies the above condition internalised (via the Yoneda embedding) to internal ternary relations in $\C$ (see \cite{Hoe18a} for the details). Majority categories capture, in a categorical way, what it means for a variety of universal algebras to admit a \emph{majority} term, i.e., a ternary term $m(x,y,z)$ satisfying the equations
 \begin{align*}
     m(x,x,y) = x, \\
     m(x,y,x) = x, \\
     m(y,x,x) = x.
 \end{align*}
 For instance, in the variety $\mathsf{Lat}$ of lattices, the term $m(x,y,z) = (x \wedge y) \vee (x \wedge z)  \vee (y \wedge z)$ or its lattice-theoretic dual both define majority terms. In a variety of rings satisfying the identity $x^n = x$ for some $n \geqslant 2$ (such as a variety of rings generated by a finite field for example), then the term $m(x,y,z) = x - (x - y)(x - z)^{n-1}$ defines a majority term. 
\begin{proposition} \label{proposition: example majority category}
Every finitely complete pointed majority category is centralic.
\end{proposition}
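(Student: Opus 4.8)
The plan is to verify condition (ii) of Proposition~\ref{proposition: reformulations of Ax1} by exhibiting a single internal ternary relation to which the defining implication of a majority category applies. So suppose $f\colon X \times Y \to Z$ is an arbitrary morphism and $x,x'\colon S \to X$, $y\colon S \to Y$ are generalised elements with $f(x,0) = f(x',0)$; the task is to deduce $f(x,y) = f(x',y)$.

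First I would build the ternary relation $R \hookrightarrow X \times X \times Y$ as the equaliser of the two morphisms $X \times X \times Y \rightrightarrows Z$ sending $(a,a',b)$ to $f(a,b)$ and to $f(a',b)$ respectively; thus $(a,a',b) \in R$ precisely when $f(a,b) = f(a',b)$. Being an equaliser, $R$ is a monomorphism into the triple product, hence a genuine internal ternary relation between $X$, $X$ and $Y$, to which the majority condition (internalised via the Yoneda embedding, as in the definition) applies.

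Next I would feed the three generalised elements $(x,x',0)$, $(x,x,y)$ and $(x',x',y)$ of $X \times X \times Y$ into the majority implication. These lie in $R$ respectively because $f(x,0)=f(x',0)$ (the hypothesis), because $f(x,y)=f(x,y)$, and because $f(x',y)=f(x',y)$. Crucially, each of these three triples is obtained from the target triple $(x,x',y)$ by altering exactly one coordinate --- the third, then the second, then the first --- which is exactly the configuration $(x,y,z'),(x,y',z),(x',y,z)$ demanded in the definition of a majority category. The majority condition therefore forces $(x,x',y) \in R$, that is, $f(x,y) = f(x',y)$, as required.

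The only real subtlety, and the step I expect to demand the most care, is choosing $R$ and matching the coordinate-priming pattern of the majority implication exactly: the two reflexive entries $(x,x,y)$ and $(x',x',y)$ must occupy the ``second-coordinate-primed'' and ``first-coordinate-primed'' slots, while the genuine hypothesis $f(x,0)=f(x',0)$ supplies the ``third-coordinate-primed'' slot. Once $R$ is set up as above, every verification is immediate, and no finite-limit construction beyond the single equaliser is needed.
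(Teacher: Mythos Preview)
Your proof is correct and follows essentially the same approach as the paper: build a ternary relation encoding the desired equality, then feed in two reflexive triples together with the hypothesis triple so that the majority implication yields $(x,x',y)\in R$. The paper's relation is actually slightly bulkier --- it takes $R$ between $X$, $X$ and $Y\times Y$ via $(a,a',(b,b'))\in R \iff f(a,b)=f(a',b')$ --- whereas your $R\hookrightarrow X\times X\times Y$ already suffices; but the mechanism is identical.
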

\begin{proof}
Given a morphism $f\:X \times Y \rightarrow Z$ with $f(x,0) = f(x',0)$ we define the ternary relation $R$ between $X$ and $X$ and $Y\times Y$
by
\[
(x,x',(y,y'))\in R \Longleftrightarrow f(x,y) = f(x',y').
\]
Then  we have
\[
(x,x',(0,0)) \in R, \quad (x,x,(y,y)) \in R, \quad (x',x',(y,y)) \in R \implies   (x,x',(y,y)) \in R,
\]
and hence $f(x,y) = f(x', y)$.
\end{proof}

\section{Centrality in centralic categories}
Throughout this section we fix a centralic category $\C$. 
\begin{proposition} \label{proposition: uniqueness of cooperator}
Given any central morphism $f\:X\rightarrow Y$ in $\C$ and any morphism $g\:X'\rightarrow Y$ in $\C$, there exists a unique cooperator $\rho_{f,g}\:X \times X' \rightarrow Y$ for $f$ and $g$. This cooperator is defined by $\rho_{f,g}(x,y) = \rho_f(x,g(y))$ where $\rho_f \:X\times Y \rightarrow Y$ is the (unique) cooperator for $f$ and $1_Y$.
\end{proposition}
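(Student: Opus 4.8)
The plan is to isolate a single device---\emph{nesting} two cooperators---and to apply it twice: once to show the cooperator of $f$ with $1_Y$ is unique, so that $\rho_f$ is well defined, and once to show that $\rho_f\circ(1_X\times g)$ is the unique cooperator of $f$ with $g$. Throughout I would argue on generalised elements and invoke condition $(ii)$ of Proposition~\ref{proposition: reformulations of Ax1}, so that nothing beyond finite products is used.

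For the uniqueness of $\rho_f$, suppose $\rho,\rho'\colon X\times Y\to Y$ are both cooperators for $f$ and $1_Y$, so $\rho(x,0)=f(x)=\rho'(x,0)$ and $\rho(0,y)=y=\rho'(0,y)$. The key step is to nest them: consider the morphism $G\colon (X\times X)\times Y\to Y$ given by $G((a,b),y)=\rho(a,\rho'(b,y))$. Evaluating on the second axis gives $G((x,0),0)=\rho(x,0)=f(x)$ and $G((0,x),0)=\rho(0,\rho'(x,0))=\rho'(x,0)=f(x)$, the second computation using $\rho(0,-)=1_Y$. Since these agree, condition $(ii)$ applied to $G$ (with first factor $X\times X$) yields $G((x,0),y)=G((0,x),y)$; but the left side is $\rho(x,\rho'(0,y))=\rho(x,y)$ and the right side is $\rho(0,\rho'(x,y))=\rho'(x,y)$, so $\rho=\rho'$. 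As $f$ is central a cooperator for $f$ and $1_Y$ exists, so this gives a well defined $\rho_f$.

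Next I would set $\rho_{f,g}:=\rho_f\circ(1_X\times g)$, that is $\rho_{f,g}(x,y)=\rho_f(x,g(y))$, and check directly that $\rho_{f,g}(x,0)=\rho_f(x,g(0))=\rho_f(x,0)=f(x)$ and $\rho_{f,g}(0,y)=\rho_f(0,g(y))=g(y)$, using that $g$ preserves $0$ and that $\rho_f(0,-)=1_Y$; hence $\rho_{f,g}$ cooperates $f$ and $g$. For uniqueness, given any cooperator $\sigma\colon X\times X'\to Y$ for $f$ and $g$, I would repeat the nesting trick with $\rho_f$ on the outside: consider $G'\colon (X\times X)\times X'\to Y$ defined by $G'((u,a),y)=\rho_f(u,\sigma(a,y))$. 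On the second axis one finds $G'((x,0),0)=\rho_f(x,\sigma(0,0))=\rho_f(x,0)=f(x)$ and $G'((0,x),0)=\rho_f(0,\sigma(x,0))=\sigma(x,0)=f(x)$, so condition $(ii)$ gives $G'((x,0),y)=G'((0,x),y)$, which reads $\rho_f(x,g(y))=\sigma(x,y)$, i.e. $\sigma=\rho_{f,g}$.

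The only genuine idea is the nesting construction together with the observation that swapping $x$ between the two copies of $X$ leaves the second-coordinate-zero value unchanged (it is $f(x)$ on both sides), which is exactly the hypothesis needed to fire condition $(ii)$. The reason both boundary values collapse to $f(x)$ is that the outer cooperator restricts to the identity in its second variable, so this is precisely where the special role of $1_Y$---and hence the centrality of $f$---enters; note also that the general argument of the last paragraph, taken with $X'=Y$ and $g=1_Y$, already recovers the uniqueness of $\rho_f$. The main thing to be careful about is the grouping of the domain: the two elements being compared must differ only in the factor over which condition $(ii)$ is applied, which is why the first factor is taken to be $X\times X$ rather than a mixed product. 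Everything else is routine verification on generalised elements, legitimate by the metatheorem recalled in the conventions.
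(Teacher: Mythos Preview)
Your proof is correct and follows essentially the same approach as the paper: the key device---nesting an arbitrary cooperator inside $\rho_f$ to form the morphism $(X\times X)\times X'\to Y$ and then comparing its values at $((x,0),0)$ and $((0,x),0)$---is exactly the paper's $\alpha((x,y),z)=\rho_f(x,\rho(y,z))$. The only difference is that you first establish the uniqueness of $\rho_f$ separately (which the paper leaves implicit, absorbing it into the general case $g=1_Y$, as you yourself observe).
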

\begin{proof}
Let  $\rho_f \:X\times Y \rightarrow Y$ be a cooperator for $f$ and $1_Y$, then it is easily checked that the morphism $\rho_{f,g}\:X \times X' \rightarrow Y$ defined by $\rho_{f,g}(x,y) = \rho_f(x,g(y))$ is a cooperator for $f$ and $g$. For uniqueness, suppose that $\rho:X \times X' \rightarrow Y$ is any cooperator for $f$ and $g$. Consider the morphism $\alpha\:(X \times X) \times X' \rightarrow Y$ defined by
\[
\alpha((x,y),z) = \rho_f(x, \rho(y,z)).
\]
Then $\alpha((x,0), 0) = \alpha((0,x), 0)$ since
\[
\alpha((x,0), 0) = \rho_f(x, \rho(0,0)) = \rho_f(x,0) = f(x) = \rho_f(0, f(x)) = \rho_f(0, \rho(x, 0) =  \alpha((0,x), 0),
\]
so that since $\C$ is centralic, we have $\alpha((x,0), y) = \alpha((0,x), y)$, and hence
\begin{align*}
\alpha((x,0), y) &= \alpha((0,x), y) \implies \\
 \rho_f(x, \rho(0,y)) &= \rho_f(0, \rho(x,y)) \implies \\
\rho_f(x,g(y)) &= \rho(x, y),
\end{align*}
which gives us uniqueness. 
\end{proof}
\begin{remark}
A natural question in light of the proposition above is if every finitely complete centralic category has that any cooperator between two  morphisms with the same codomain is unique, i.e., is every centralic category weakly unital? We can answer this question in the negative by considering Proposition~\ref{proposition: example majority category}, which states that every finitely complete pointed majority category is centralic. But not every pointed majority category is weakly unital: if $\C$ is a majority category then every category of points $\Pt_{\C}(X)$ (see \cite{Bou96} for an exact definition) is a majority category (see Example~2.15 in \cite{Hoe18a}). Thus in the variety $\mathsf{Lat}$ of lattices (the prototypical example of majority category) every category of points $\Pt_{\mathsf{Lat}}(X)$ (where $X$ is a lattice) a finitely complete majority category. However not every category of points $\Pt_{\mathsf{Lat}}(X)$ is weakly unital, since if it were then $\mathsf{Lat}$ would be weakly Mal'tsev in the sense of \cite{Nelson2008, Ferreira2008}, and a variety of lattices is weakly Mal'tsev if and only if it is a variety of distributive lattices \cite{Ferreira2012, Ferreira2015}. 
\end{remark}
The corollary below is the analogue of Theorem~3.1.13 in \cite{Huq1968} and Theorem~4.2 in \cite{Bou02}, in our context, and its proof is similar.
\begin{corollary} \label{corollary: cooperator central}
Given two central morphisms $f\:X \rightarrow Y$ and $g\:X'\rightarrow Y$ in $\C$ their cooperator $\rho_{f,g}\:X \times X' \rightarrow Y$ is central.
\end{corollary}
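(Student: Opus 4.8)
Since $f$ and $g$ are both central, each admits a unique cooperator with $1_Y$, call them $\rho_f\colon X \times Y \to Y$ and $\rho_g\colon X' \times Y \to Y$, and by Proposition~\ref{proposition: uniqueness of cooperator} we have the explicit formula $\rho_{f,g}(x,y) = \rho_f(x,g(y))$. To prove $\rho_{f,g}$ is central I must produce a morphism $\sigma\colon (X \times X') \times Y \to Y$ with $\sigma((x,x'),0) = \rho_{f,g}(x,x')$ and $\sigma(0,y) = y$. The natural candidate, built by iterating the known cooperators, is
\[
\sigma((x,x'),y) = \rho_f(x, \rho_g(x', y)),
\]
so the first step is to write this down and verify it is a well-defined morphism.

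Next I would check the two defining cooperator equations. Setting $y = 0$ gives $\sigma((x,x'),0) = \rho_f(x,\rho_g(x',0)) = \rho_f(x, g(x')) = \rho_{f,g}(x,x')$, using the defining property of $\rho_g$ (namely $\rho_g(x',0) = g(x')$) and then the formula from Proposition~\ref{proposition: uniqueness of cooperator}. Setting $(x,x') = (0,0)$ gives $\sigma(0,y) = \rho_f(0,\rho_g(0,y)) = \rho_f(0,y) = y$, using that $\rho_g(0,y) = y$ and $\rho_f(0,y) = y$ since both cooperate with $1_Y$. These are straightforward substitutions once the centralising equations for $\rho_f$ and $\rho_g$ are in hand.

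\textbf{The main obstacle is not the computation but the verification that $\sigma$ as written is genuinely independent of any spurious coincidences} — in other words, that the formula using generalised elements determines an honest morphism in $\C$ rather than merely a well-defined function on points. Here the centralic hypothesis does the real work: the argument in Proposition~\ref{proposition: uniqueness of cooperator} already illustrates the pattern, where one introduces an auxiliary morphism such as $\alpha\colon (X \times X') \times Y \to Y$, checks an equality of the form $\alpha(\cdots,0) = \alpha(\cdots',0)$, and then invokes centrality (condition $(i)$ or $(iii)$ of Proposition~\ref{proposition: reformulations of Ax1}) to upgrade it to the equality with the second coordinate $y$ free. I expect to replay exactly this device to confirm that $\rho_f(x,\rho_g(x',y))$ behaves coherently, and so the crux is choosing the right auxiliary morphism on which to apply the centralic condition. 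Once $\sigma$ is confirmed to be a cooperator for $\rho_{f,g}$ and $1_Y$, centrality of $\rho_{f,g}$ follows by definition, completing the proof.
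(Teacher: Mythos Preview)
Your approach is correct and essentially identical to the paper's: both define the candidate cooperator $\alpha((x,x'),y)=\rho_f(x,\rho_g(x',y))$ and verify the two defining equations by direct substitution, exactly as you do in your second paragraph.

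One clarification about your final paragraph: the ``obstacle'' you flag is not actually present. The expression $\rho_f(x,\rho_g(x',y))$ is nothing more than the composite $\rho_f\circ(1_X\times\rho_g)$ (modulo the associativity isomorphism $(X\times X')\times Y\cong X\times(X'\times Y)$), so it is automatically an honest morphism in $\C$; there is no further ``coherence'' to check and no need to invoke the centralic condition again. The centralic hypothesis has already done its work inside Proposition~\ref{proposition: uniqueness of cooperator}, which you correctly cite to obtain the formula $\rho_{f,g}(x,x')=\rho_f(x,g(x'))$. The paper's proof accordingly ends right after the verification you give in your second paragraph.
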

\begin{proof}
Let $\rho_f\:X \times Y \rightarrow Y$ be the cooperator for $f$ and let $\rho_g\:X' \times Y \rightarrow Y$ be the cooperator for $g$ and $1_Y$. By Proposition~\ref{proposition: uniqueness of cooperator} we have that the cooperator $\rho$ for $f$ and $g$ is precisely given by $\rho = \rho_f   (1 \times g)$. It is then easily checked that the morphism $\alpha\:(X \times X') \times Y \rightarrow Y$ defined by $\alpha((x,x'), y) = \rho_f(x ,\rho_g(x',y))$ is a cooperator for $\rho$.
\end{proof}
Following \cite{Bou96} let us write $\Z(\C)$ for the class of all central morphisms in $\C$. Given objects $X$ and $Y$ in $\C$, we write $\Z(X,Y)$ for all central morphisms from $X$ to $Y$ (as in \cite{Bou96}). The below proposition is similar to Theorem~4.1 in \cite{Bou96} and the proof is the same.
\begin{proposition} \label{proposition: Z right ideal}
The class $\Z(\C)$ is a right-ideal of $\C$.
\end{proposition}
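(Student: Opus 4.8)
The plan is to unwind what ``right ideal'' means here: $\Z(\C)$ being a right ideal is precisely the assertion that it is closed under precomposition with arbitrary morphisms. So given a central morphism $f\colon X \rightarrow Y$ and an \emph{arbitrary} morphism $g\colon W \rightarrow X$, I must exhibit a cooperator for the composite $fg\colon W \rightarrow Y$ and $1_Y$, thereby witnessing that $fg$ is central. (Note it is only a right ideal, not a two-sided one: postcomposing a central morphism need not preserve centrality, so I would not attempt the symmetric statement.)

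The key idea is to transport the cooperator of $f$ backwards along $g$. Since $f$ is central there is a cooperator $\rho_f\colon X \times Y \rightarrow Y$ for $f$ and $1_Y$, characterised by $\rho_f(x,0) = f(x)$ and $\rho_f(0,y) = y$. My candidate cooperator for $fg$ and $1_Y$ is then the composite $\rho_f(g \times 1_Y)\colon W \times Y \rightarrow Y$. Working with generalised elements, I would check the two defining equations: evaluating at $(w,0)$ gives $\rho_f(g(w),0) = f(g(w)) = (fg)(w)$, and evaluating at $(0,y)$ gives $\rho_f(g(0),y)$.

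The only point requiring any care is the second computation, where I need $g(0) = 0$ so that $\rho_f(g(0),y) = \rho_f(0,y) = y$; this holds because every morphism in a pointed category preserves zero morphisms, the zero morphism factoring through the zero object. With that observed, $\rho_f(g \times 1_Y)$ satisfies both cooperator identities, so $fg$ is central and $\Z(\C)$ is a right ideal. I do not anticipate a genuine obstacle: the argument collapses to a one-line diagram chase once the correct candidate is guessed, and in fact neither the uniqueness of cooperators (Proposition~\ref{proposition: uniqueness of cooperator}) nor even the centralic hypothesis is invoked here --- only the \emph{existence} of a cooperator for $f$ is used.
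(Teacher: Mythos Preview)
Your argument is correct and is exactly the paper's approach: the paper likewise defines the cooperator of $fg$ as $\rho_f(g\times 1_Y)$, merely omitting the elementwise verification you spell out. Your observation that neither uniqueness nor the centralic hypothesis is actually invoked is accurate and consistent with the paper's one-line proof.
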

\begin{proof}
If $f\:X\rightarrow Y$ is central with cooperator $\rho_f$ and $x\:S \rightarrow X$ is any morphism then $\rho_f   (x \times 1_Y)$ makes $f  x$ central. 
\end{proof}
Note that, according to the proposition above, the class $\Z(\C)$ is closed under composition.
\subsection{The additive core of $\C$} 
When $\C$ is a unital category there is a canonical commutative monoid structure on $\Z(X,Y)$ which acts canonically on $\C(X,Y)$. This is the so-called ``additive core'' referred to in \cite{Bou02}. We will now show that this additive core is present inside any centralic category $\C$. Consider the map (as it is defined in \cite{Bou02})
\[
\Z(X,Y) \times \C(X,Y) \rightarrow \C(X,Y), \quad (f,g) \longmapsto f \star g = \rho_{f,g}(1_X,1_X),
\]
where $\rho_{f,g}$ is the unique cooperator (Proposition~\ref{proposition: uniqueness of cooperator}) of $f$ and $g$. Note that we have
\[
\rho_{f,g} = \rho_f (1_X \times g)
\]
as the unique cooperator of $f$ and $g$. The zero morphism $0\:X \rightarrow Y$ is central with cooperator $\pi_2\:X \times Y \rightarrow Y$, so that for any $g\in \C(X,Y)$ we have by the above formulas that $0\star g = \pi_2 (1_X \times g) (1_X,1_X) = g$. Now suppose that $f,g \in \Z(X,Y)$ and $h\in\C(X,Y)$ and write $\rho_{f,g}$ for the unique cooperator between $f$ and $g$ (Proposition~\ref{proposition: uniqueness of cooperator}), and write $\rho_{(f,g), h}$ for the unique cooperator between $\rho_{f,g}$  and $h$ (since $\rho_{f,g}$ is central by Corollary~\ref{corollary: cooperator central}). Similarly, $\rho_{f, (g,h)}$ is the unique cooperator between $f$ and $\rho_{g,h}$. The (necessarily unique) cooperator for $f \star g$ and $h$ is just $\rho _{(f,g),h}[(1_X,1_X)\times 1_X]$ since we have the commutative diagram
\[
\xymatrix{
X \ar[d]_{(1_X,1_X)} \ar[rr]^-{(1_X,0)} & & X \times X \ar[d]_{(1_X,1_X)\times 1_X} & & X \ar[ll]_-{(0,1_X)} \ar[d]^{1_X} \\
X \times X \ar[rr]^-{(1_{X \times X},0)} \ar[rrd]_{\rho_{f,g}} & & (X \times X) \times X \ar[d]_{\rho_{(f,g), h}} & & X \ar[ll]_-{(0,1_X)} \ar[lld]^h \\
& & Y & 
}
\]
so that
\[
(f \star g) \star h = \rho_{(f,g),h}((1_X,1_X),1_X).
\]
Similarly, we also have
\[
f \star (g \star h) =  \rho_{f,(g,h)}(1_X, (1_X,1_X)).
\]
Let $\alpha\:X \times (X \times X) \rightarrow (X \times X) \times X$ be the canonical associativity isomorphism. It is then routine to check that the composite $\rho_{(f,g),h} \alpha (0,1_{X \times X})$ defines a cooperator for $g$ and $h$  and is therefore equal to $\rho_{g,h}$ by Proposition~\ref{proposition: uniqueness of cooperator}. Similarly, it may be checked that $\rho_{(f,g),h} \alpha (1_X,(0,0)) = f$, so that the morphism $\rho_{(f,g),h} \alpha$ defines a cooperator for $f$ and $\rho_{g,h}$ and hence  $\rho_{(f,g),h} \alpha= \rho_{f,(g,h)}$ --- by Proposition~\ref{proposition: uniqueness of cooperator}. Then we have: 
\begin{align*}
f \star (g \star h) &= \rho_{f,(g,h)}(1_X,(1_X,1_X)) \\
&= \rho_{(f,g),h} \alpha (1_X, (1_X, 1_X)) \\
&= \rho_{(f,g),h} ((1_X, 1_X), 1_X)) \\
&= (f \star g) \star h.
\end{align*}
Note that by Corollary~\ref{corollary: cooperator central} and Proposition~\ref{proposition: Z right ideal} if $f, g\in \Z(X,Y)$ then $f \star g \in \Z(X,Y)$ so that by the above remarks the map $\star$ above restricts to a monoid operation on $\Z(X,Y)$ which we will denote by $+$. This monoid is commutative since if $f,g \in \Z(X,Y)$ then their cooperator $\rho_{f,g}$ is explicitly given by
\[
\rho_g   (1_X \times f) = \rho_{f,g} = \rho_f   (1_X \times g).
\]
Then the map $\star$ given above becomes a commutative monoid action on $\C(X,Y)$. Given any morphism $x\:X' \rightarrow X$ the canonical map $\C(x,Y)\:\C(X,Y) \rightarrow \C(X',Y)$ is a homomorphism of monoid actions, and the proof of Proposition~4.4 in \cite{Bou02} applies directly: given $f \in \Z(X,Y)$ and $g \in \C(X,Y)$ we have
\begin{align*}
(f \star g) x &= \rho_{f,g}(1_X,1_X)x \\
            &= \rho_f(1_X \times g) (x \times x )(1_{X'},1_{X'}) \\
            &= \rho_f(x \times gx)(1_{X'},1_{X'}) \\
            &= \rho_{fx}(1_{X'} \times gx) (1_{X'},1_{X'}) \\
            &= fx \star gx
\end{align*}
Moreover, restricting the canonical map $\C(x,Y)$ to $\Z(X,Y)$ produces a monoid homomorphism $\Z(x,Y)\:\Z(X,Y) \rightarrow \Z(X',Y)$. 

\subsubsection*{Symmetrizable morphisms}
Recall from \cite{Bou02} the notion of a \emph{symmetrizable morphism} (Definition~4.2 in \cite{Bou02}):
\begin{definition}
A morphism $f\:X \rightarrow Y$ in $\C$ is symmetrizable if it has an inverse in $\Z(X,Y)$. A commutative object $X$ is called abelian if $1_X$ is symmetrizable. The full subcategory of abelian objects in $\C$ is denoted by $\Ab(\C)$.
\end{definition}
\noindent
As in \cite{Bou02}, for two objects $X,Y$ in $\C$ we write $\ZZ(X,Y)$ for all symmetrizable morphisms from $X$ to $Y$. The same proofs of Proposition~4.7, 4.8, Theorem~4.3, Proposition~4.6 in \cite{Bou02} apply so that we have
\begin{itemize}
    \item A central morphism $f\:X \rightarrow Y$ is symmetrizable if and only if
    \[
    \xymatrix{
    X \times Y \ar[d]_{\pi_1} \ar[r]^-{\rho_f} & Y \ar[d] \\
    X \ar[r]& 0
    }
    \]
    is a pullback;
    \item if $f\:X \rightarrow Y$ and $g\:X' \rightarrow Y$ are symmetrizable then their cooperator $\rho_{f,g}\:X \times X' \rightarrow Y$ is symmetrizable;
    \item the class $\ZZ(\C)$ is a right ideal and $\ZZ(X,Y)$ is an abelian group which has a canonical action on $\C(X,Y)$;
    \item an object $X$ is abelian if and only if is the underlying object of a internal abelian group (which is necessarily unique).
\end{itemize}
\begin{proposition} \label{proposition: qoutient commutative}
Let $\C$ be a finitely complete centralic category and suppose that regular epimorphisms are stable under binary products in $\C$. In the diagram
\[
\xymatrix{
A \ar[d]_{q_1} \ar[r]^-{(1_A,0)} & A \times B \ar[d]| {q_1 \times q_2} & B \ar[d]^{q_2} \ar[l]_-{(0,1_B)} \\
X \ar@/_/[rd]_{f}  \ar[r]^-{(1_X,0)} & X \times Y & Y \ar[l]_-{(0,1_Y)} \ar@/^/[ld]^g \\
& Z &
}
\]
where $q_1$ and $q_2$ are regular epimorphisms, we have that $f$ commutes with $g$ if and only if $fq_1$ commutes with $gq_2$.
\end{proposition}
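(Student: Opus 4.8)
The plan is to prove each direction separately, using the stability of regular epimorphisms under binary products to transport cooperators along the quotient maps. The statement asserts that $f$ commutes with $g$ if and only if $fq_1$ commutes with $gq_2$, so I will produce a cooperator on one side from a cooperator on the other.

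First I would handle the ``only if'' direction, which I expect to be the easier one and essentially formal. Suppose $f$ and $g$ commute, with cooperator $\rho\:X \times Y \rightarrow Z$ satisfying $\rho(1_X,0) = f$ and $\rho(0,1_Y) = g$. Then I would simply set $\sigma = \rho(q_1 \times q_2)\:A \times B \rightarrow Z$ and verify directly that $\sigma$ is a cooperator for $fq_1$ and $gq_2$: using generalised elements, $\sigma(a,0) = \rho(q_1(a),0) = f q_1(a)$ and $\sigma(0,b) = \rho(0,q_2(b)) = g q_2(b)$, so $\sigma(1_A,0) = fq_1$ and $\sigma(0,1_B) = gq_2$ as required. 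This direction needs neither centrality nor the regularity hypothesis.

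The ``if'' direction is where the work lies, and where I expect the main obstacle to be. Suppose $fq_1$ and $gq_2$ commute, with cooperator $\sigma\:A \times B \rightarrow Z$. I want to produce $\rho\:X \times Y \rightarrow Z$ cooperating $f$ and $g$. The natural candidate is to define $\rho$ as the factorisation of $\sigma$ through $q_1 \times q_2$, which requires two things: that $q_1 \times q_2$ is a regular epimorphism, and that $\sigma$ coequalises the kernel pair of $q_1 \times q_2$. The first point is exactly the stability hypothesis (the product of two regular epimorphisms is again a regular epimorphism). For the second, I would argue that $\Eq(q_1 \times q_2) = \Eq(q_1) \times \Eq(q_2) \leqslant \Eq(\sigma)$, so that $\sigma$ factors uniquely as $\sigma = \rho(q_1 \times q_2)$. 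Here centrality enters in the form of Proposition~\ref{proposition: reformulations of Ax1}$(iii)$: given $(a,b)$ and $(a',b')$ with $q_1(a) = q_1(a')$ and $q_2(b) = q_2(b')$, I have $\sigma(a,0) = fq_1(a) = fq_1(a') = \sigma(a',0)$ and likewise $\sigma(0,b) = \sigma(0,b')$, whence $\sigma(a,b) = \sigma(a',b')$ by centrality; this is precisely the containment of kernel equivalence relations needed.

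Finally I would check that the induced $\rho$ is genuinely a cooperator for $f$ and $g$. Since $q_1$ is (regular) epic and $\rho(q_1 \times q_2) = \sigma$, evaluating along the first inclusion gives $\rho(1_X,0)q_1 = \rho(q_1,0) = \sigma(1_A,0) = fq_1$, and cancelling the epimorphism $q_1$ yields $\rho(1_X,0) = f$; symmetrically $\rho(0,1_Y) = g$ using that $q_2$ is epic. Thus $\rho$ cooperates $f$ and $g$, completing the direction. The crux of the whole argument is the single centrality-driven inclusion $\Eq(q_1 \times q_2) \leqslant \Eq(\sigma)$, which is what allows the cooperator to descend along the product of quotient maps.
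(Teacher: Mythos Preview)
Your proof is correct and follows essentially the same route as the paper's: the easy direction by precomposition with $q_1\times q_2$, and the converse by showing $\Eq(q_1\times q_2)\leqslant\Eq(\sigma)$ via centrality so that the cooperator descends along the regular epimorphism $q_1\times q_2$, then cancelling $q_1,q_2$ to verify the cooperator identities. The only cosmetic difference is that the paper packages the inclusion $\Eq(q_1\times q_2)\leqslant\Eq(\sigma)$ as a citation of the earlier Proposition~\ref{proposition: kernel cooperator} (via the chain $\Eq(q_1\times q_2)\leqslant\Eq((fq_1)\times(gq_2))\leqslant\Eq(\sigma)$), whereas you argue it directly from Proposition~\ref{proposition: reformulations of Ax1}(iii).
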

\begin{proof}
Suppose $f$ and $g$ commute with cooperator $\rho$, then $\rho (q_1 \times q_2)$ is a cooperator for $fq_1$ and $gq_2$. Conversely, suppose that $\rho\:A \times B \rightarrow Z$ is a cooperator for  $fq_1$ and $gq_2$. By Proposition~\ref{proposition: kernel cooperator} we have $\Eq\big ((fq_1) \times (gq_2)\big) \leqslant \Eq(\rho)$, and since we always have $\Eq(q_1 \times q_2) \leqslant \Eq\big ((fq_1) \times (gq_2)\big)$ we get $\Eq(q_1 \times q_2) \leqslant \Eq(\rho)$. Since $q_1 \times q_2$ is a regular epimorphism the dotted arrow exists making the triangle
\[
\xymatrix{
A \times B \ar[rd]_-{\rho} \ar[r]^-{q_1 \times q_2} & X \times Y \ar@{..>}[d]^-{\rho'} \\
& Z
}
\]
commute. To see that $\rho'$ is a cooperator for $f$ and $g$, we have
\[
\rho'(1_{X},0)q_1 = \rho' (q_1 \times q_2) (1_A,0) = \rho (1_A,0) = f q_1
\]
and since $q_1$ is an epimorphism we have $\rho'(1_{X},0) = f$. The equality $\rho'(0,1_{Y}) = g$ follows similarly. 
\end{proof}
Given any finitely complete centralic category $\C$ whose regular epimorphisms are stable binary products, we have the following two corollaries:
\begin{corollary} \label{corollary: X commutative if q1 commutes q2} 
For any two regular epimorphisms $q_1\:A \rightarrow X$ and $q_2\:B \rightarrow X$ in $\C$, the object $X$ is commutative if and only if $q_1$ commutes with $q_2$.
\end{corollary}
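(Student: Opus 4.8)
The plan is to obtain this as an immediate specialisation of Proposition~\ref{proposition: qoutient commutative}, since the ambient category $\C$ is assumed finitely complete, centralic, and with regular epimorphisms stable under binary products --- exactly the hypotheses required there. The first step is to unwind the definition of a commutative object: by definition $X$ is commutative precisely when $1_X$ is central, and $1_X$ is central precisely when it commutes with $1_X$. Thus the assertion ``$X$ is commutative'' is literally the assertion ``$1_X$ commutes with $1_X$'', and this is the reformulation I would record at the outset so that the corollary becomes a statement purely about the commutation of morphisms.

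Next I would instantiate the data of Proposition~\ref{proposition: qoutient commutative} by taking $Y := X$, $Z := X$, and $f := 1_X =: g$. The two regular epimorphisms $q_1 \: A \rightarrow X$ and $q_2 \: B \rightarrow X$ of the present statement then play exactly the roles of the regular epimorphisms in the proposition. The only verification at this step is the trivial identification $fq_1 = 1_X q_1 = q_1$ and $gq_2 = 1_X q_2 = q_2$, which aligns the two composites appearing in the proposition with the morphisms $q_1$ and $q_2$ of the corollary.

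With this substitution in place, the equivalence furnished by Proposition~\ref{proposition: qoutient commutative} --- that $f$ commutes with $g$ if and only if $fq_1$ commutes with $gq_2$ --- reads precisely as: $1_X$ commutes with $1_X$ if and only if $q_1$ commutes with $q_2$. Combining this with the reformulation from the first step, namely that $1_X$ commuting with $1_X$ is the same as $X$ being commutative, yields the desired biconditional. I do not anticipate a genuine obstacle here: the content of the corollary is carried entirely by the preceding proposition, and the work consists only in recognising the correct specialisation of its variables and in matching the definition of a commutative object to the commutation of $1_X$ with itself. The one point to keep honest is that the hypotheses on $\C$ (stability of regular epimorphisms under binary products, finite completeness) are inherited from the standing assumptions preceding the corollary, so no additional conditions need be imposed.
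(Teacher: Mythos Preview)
Your proposal is correct and follows exactly the route the paper intends: the corollary is stated without proof as an immediate specialisation of Proposition~\ref{proposition: qoutient commutative}, and your instantiation $Y = Z = X$, $f = g = 1_X$ (together with the observation that $X$ is commutative precisely when $1_X$ commutes with itself) is the intended argument.
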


\begin{corollary} \label{corollary: qoutient commutative/abelian}
 Let $q\:A \rightarrow A'$ be a regular epimorphism in $\C$ then for any morphism $f\:A' \rightarrow Z$ in $\C$ if $fq$ is central then so is $f$. If $\C$ is regular, then $fq$ symmetrizable implies $f$ symmetrizable.
\end{corollary}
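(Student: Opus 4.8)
The plan is to treat the two assertions separately, obtaining the first as an immediate instance of Proposition~\ref{proposition: qoutient commutative} and reducing the second to it together with the pullback characterisation of symmetrizable morphisms recorded just above.

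For the first assertion I would apply Proposition~\ref{proposition: qoutient commutative} with $q_1 = q$, with $q_2 = 1_Z$, and with the two lower morphisms taken to be $f$ and $g = 1_Z$. Since $1_Z$ is a regular epimorphism the hypotheses are satisfied, and the conclusion ``$f$ commutes with $g$ if and only if $fq_1$ commutes with $gq_2$'' reads exactly as ``$f$ commutes with $1_Z$ if and only if $fq$ commutes with $1_Z$'', i.e.\ $f$ is central if and only if $fq$ is central. In particular $fq$ central forces $f$ central.

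For the second assertion I first note that a symmetrizable morphism is central, so $fq$ symmetrizable gives, by the first part, that $f$ is central and hence that the cooperator $\rho_f\:A' \times Z \to Z$ of $f$ with $1_Z$ exists. A short check using the uniqueness of cooperators (Proposition~\ref{proposition: uniqueness of cooperator}) identifies $\rho_{fq} = \rho_f(q \times 1_Z)$, and this yields the commuting square
\[
\xymatrix{
A \times Z \ar[r]^-{(\pi_1,\rho_{fq})} \ar[d]_{q \times 1_Z} & A \times Z \ar[d]^{q \times 1_Z} \\
A' \times Z \ar[r]_-{(\pi_1,\rho_f)} & A' \times Z.
}
\]
By the pullback characterisation of symmetrizability, $fq$ symmetrizable says precisely that $(\pi_1,\rho_{fq})$ is an isomorphism, while stability of regular epimorphisms under binary products gives that $q \times 1_Z$ is a regular epimorphism. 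The goal is then to show that $(\pi_1,\rho_f)$ is an isomorphism, which by the same characterisation is exactly what it means for $f$ to be symmetrizable.

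To see that $(\pi_1,\rho_f)$ is an isomorphism I would show it is simultaneously a regular epimorphism and a monomorphism. For the former, rewriting the square as $q \times 1_Z = (\pi_1,\rho_f)\bigl((q\times 1_Z)(\pi_1,\rho_{fq})^{-1}\bigr)$ exhibits the regular epimorphism $q \times 1_Z$ as factoring through $(\pi_1,\rho_f)$, and a morphism through which a regular epimorphism factors is itself a regular epimorphism in a regular category. For the latter I would use generalised elements: given $w\:S \to A'$ and $z,z'\:S \to Z$ with $\rho_f(w,z) = \rho_f(w,z')$, regularity lets me lift $w$ through $q$ after pulling back along a regular epimorphism $s\:S' \to S$, producing $\tilde w\:S' \to A$ with $q\tilde w = ws$; precomposing the equation with $s$ rewrites it as $\rho_{fq}(\tilde w, zs) = \rho_{fq}(\tilde w, z's)$, and injectivity of the isomorphism $(\pi_1,\rho_{fq})$ yields $zs = z's$, whence $z = z'$ since $s$ is epic. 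The main obstacle is precisely this monomorphism step: the equation $\rho_f(w,z)=\rho_f(w,z')$ cannot be cancelled directly because $w$ need not lift through $q$ on the nose, so the argument genuinely relies on regularity to lift $w$ along $q$ up to a covering $s$ and then transport the cancellation property already available for $(\pi_1,\rho_{fq})$.
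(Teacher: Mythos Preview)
Your proof is correct. The first assertion is handled exactly as in the paper. For the second, the paper takes a slightly different and more conceptual route: it assembles the two-square diagram
\[
\xymatrix{
A \times Z \ar[d]_{\pi_1} \ar[r]^{q \times 1_Z} & A' \times Z \ar[d]_{\pi_1} \ar[r]^-{\rho_f} & Z \ar[d] \\
A \ar[r]_q & A' \ar[r] & 0
}
\]
observes that the outer rectangle is a pullback (since $\rho_f(q\times 1_Z)=\rho_{fq}$ and $fq$ is symmetrizable) and that the left square is a pullback with bottom map the regular epimorphism $q$, and then invokes the Barr--Kock descent lemma for regular categories (Lemma~1.15 in \cite{Gran2021}) to conclude that the right square is a pullback, i.e.\ that $f$ is symmetrizable. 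You instead translate the pullback condition into $(\pi_1,\rho_f)$ being an isomorphism and verify this by hand, splitting into a regular-epimorphism part (factoring the regular epi $q\times 1_Z$ through it) and a monomorphism part (a covering-and-lifting argument using stability of regular epis under pullback). Both arguments are valid; the paper's is shorter and isolates the general categorical fact being used, while yours is self-contained and does not rely on citing the descent lemma. Note, incidentally, that your mono argument is essentially an elementwise unpacking of the Barr--Kock lemma in this special case.
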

\begin{proof}
If $fq$ is central then we apply Proposition~\ref{proposition: qoutient commutative} with $q_1 = q$ and $q_2 = g = 1_Z$, so that $f$ is central. If $fq$ is symmetrizable, then the same argument in the proof of Proposition~4.10 in \cite{Bou02} works in our situation: suppose that $\rho_f$ is the cooperator for $f$ and $1_Z$ and consider the diagram below.
\[
\xymatrix{
A \times Z \ar[d]_{\pi_1} \ar[r]^{q \times 1_Z} & A' \times Z \ar[d]_{\pi_1} \ar[r]^-{\rho_f} & Z \ar[d] \\
A \ar[r]_q & A' \ar[r] & 0
}
\]
The outer rectangle is a pullback by assumption, since $\rho_f (q \times 1_Z) = \rho_{f, q}.$ The left-hand bottom horizontal morphism is a regular epimorphism, consequently the right-hand square is a pullback since $\C$ is a regular category (see Lemma 1.15 in \cite{Gran2021}, for instance). 
\end{proof}
The proposition below is similar to Proposition~4.9 in \cite{Bou02}.
\begin{proposition} \label{proposition: central qoutient is central}
Let $\C$ be a finitely complete centralic category and suppose that regular epimorphisms in $\C$ are stable under binary products. Given that $f\:A\rightarrow Y$ is central/symmetrizable in $\C$ and $q\:Y \rightarrow Q$ is a regular epimorphism  in $\C$, then $qf$ is central/symmetrizable.
\end{proposition}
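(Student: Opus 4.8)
The plan is to treat the central case first and then bootstrap to the symmetrizable case via a monoid homomorphism, so that neither step requires $\C$ to be fully regular beyond the stated stability of regular epimorphisms under binary products.

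For the central case, let $\rho_f\:A \times Y \rightarrow Y$ denote the (unique, by Proposition~\ref{proposition: uniqueness of cooperator}) cooperator for $f$ and $1_Y$. Since regular epimorphisms are stable under binary products, $1_A \times q\:A \times Y \rightarrow A \times Q$ is a regular epimorphism, hence the coequaliser of its kernel pair. I would therefore produce the cooperator $\rho_{qf}$ for $qf$ and $1_Q$ by factoring $q\rho_f$ through $1_A \times q$. The first task is to verify $\Eq(1_A \times q) \leqslant \Eq(q\rho_f)$: if $(a,y)$ and $(a',y')$ are identified by $1_A \times q$, then $a = a'$ and $qy = qy'$, so writing $g = q\rho_f$ we get $g(0,y) = qy = qy' = g(0,y')$ (using $\rho_f(0,y)=y$); the centralic axiom in the second variable (the symmetric form of Proposition~\ref{proposition: reformulations of Ax1}$(ii)$, or directly $(iii)$) then gives $g(a,y) = g(a,y')$, i.e. $q\rho_f(a,y) = q\rho_f(a',y')$. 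The induced factorisation $\rho_{qf}\:A \times Q \rightarrow Q$ satisfies $\rho_{qf}(1_A \times q) = q\rho_f$. It remains to check the two cooperator identities: $\rho_{qf}(1_A,0) = qf$ is immediate from $(1_A\times q)(1_A,0) = (1_A,0)$ and $\rho_f(1_A,0)=f$, while $\rho_{qf}(0,1_Q) = 1_Q$ follows from $\rho_{qf}(0,q) = q\rho_f(0,1_Y) = q$ together with $q$ being an epimorphism.

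For the symmetrizable case, recall that a symmetrizable morphism is precisely an invertible element of the commutative monoid $(\Z(A,Y),+)$, so it suffices to show that post-composition with $q$ defines a monoid homomorphism $q_*\:\Z(A,Y) \rightarrow \Z(A,Q)$, $h \mapsto qh$, since a monoid homomorphism carries units to units. That $q_*$ lands in $\Z(A,Q)$ is exactly the central case applied to each central $h$, and $q_*(0) = 0$ since $q$ preserves zero. For additivity I would use the formula $h_1 + h_2 = \rho_{h_1}(1_A, h_2)$ (coming from $\rho_{h_1,h_2} = \rho_{h_1}(1_A \times h_2)$) together with the defining relation $\rho_{qh_1}(1_A\times q) = q\rho_{h_1}$ supplied by the central case: since $(1_A\times q)(1_A,h_2) = (1_A,qh_2)$, one computes $q_*(h_1) + q_*(h_2) = \rho_{qh_1}(1_A,qh_2) = \rho_{qh_1}(1_A\times q)(1_A,h_2) = q\rho_{h_1}(1_A,h_2) = q(h_1+h_2) = q_*(h_1+h_2)$. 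Applying $q_*$ to an inverse of $f$ in $\Z(A,Y)$ then yields an inverse of $qf$ in $\Z(A,Q)$, so $qf$ is symmetrizable.

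The main obstacle is the central case, and specifically the verification $\Eq(1_A \times q) \leqslant \Eq(q\rho_f)$: everything hinges on feeding the identity $qy = qy'$ into the centralic axiom in the correct (second) variable, after which the factorisation through the stable regular epimorphism $1_A \times q$ is routine. I expect the symmetrizable case to be comparatively painless once the central case is in hand, the only point of care being that the monoid-homomorphism route deliberately sidesteps the pullback criterion for symmetrizability and thereby avoids invoking full regularity of $\C$.
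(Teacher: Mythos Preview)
Your central case is essentially identical to the paper's: you factor $q\rho_f$ through the regular epimorphism $1_A\times q$ after checking $\Eq(1_A\times q)\leqslant\Eq(q\rho_f)$ via the centralic axiom, and then verify the two cooperator identities exactly as the paper does.

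For the symmetrizable case the paper simply says ``the same argument in Proposition~4.9 in [Bou02] may be applied'', whereas you spell out a self-contained argument: post-composition with $q$ is a monoid homomorphism $\Z(A,Y)\to\Z(A,Q)$, hence sends invertibles to invertibles. Your additivity computation is correct (and rests precisely on the relation $\rho_{qh_1}(1_A\times q)=q\rho_{h_1}$ just established in the central case, together with the formula $h_1+h_2=\rho_{h_1}(1_A,h_2)$ from the additive-core subsection). This has the pleasant feature of staying entirely within the paper's own framework rather than invoking an external reference, and of making transparent that no appeal to the pullback criterion or to full regularity is needed; whatever Bourn's original argument looks like, yours is at least as economical in hypotheses and fits the present paper more cleanly.
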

\begin{proof}
Let $\rho_f$ be the cooperator of $f$ and $1_Y$, then the morphism $1_A \times q\:A \times Y \rightarrow A \times Q$ is a regular epimorphism such that $\Eq(1_A \times q) \leqslant \Eq(q \rho_f)$. Indeed, one has the following string of implications
\begin{align*}
(1_X \times q)(x,y) &= (1_X \times q)(x,y') \implies \\
q(y) &= q(y') \implies \\
q\rho_f (0, y) &= q\rho_f (0, y') \implies \\
q\rho_f (x, y) &= q\rho_f (x, y'), 
\end{align*}
so that there exists a morphism $\rho\:A \times Q \rightarrow Q$ such that $q\rho_f = \rho (1_A \times q)$. Then $\rho$ is the desired cooperator for $qf$ and $1_Q$, since $$\rho (1_A,0) = \rho(1_A \times q)(1_A,0) =  q \rho_f (1_A,0) = f$$ and $$\rho(0,1_Q)q = \rho (1_A \times q) (0,1_Y) = q \rho_f (0,1_Y) = q 1_Y = 1_Q q$$ which implies $\rho(0,1_Q) = 1_Q$ since $q$ is a (regular) epimorphism. In the case that $f$ is symmetrizable, the same argument in Proposition~4.9 in \cite{Bou02} may be applied to show that $qf$ is symmetrizable.
\end{proof}
The following corollary is immediate.
\begin{corollary} \label{corollary: commutative closed under qoutients}
If regular epimorphisms are stable under binary products in $\C$ then the subcategories $\Com(\C)$ and $\Ab(\C)$ are closed under regular quotients in $\C$, i.e., for any regular epimorphism $q\:X \rightarrow Y$ if $X$ is commutative/abelian then so is $Y$.
\end{corollary}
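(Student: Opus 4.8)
The plan is to deduce both closure statements directly from Proposition~\ref{proposition: central qoutient is central} and Corollary~\ref{corollary: qoutient commutative/abelian}, once the relevant objects are recognised in terms of their identity morphisms. Recall that an object $X$ in $\C$ is commutative exactly when $1_X$ is central, and abelian exactly when $1_X$ is symmetrizable; so for a regular epimorphism $q\colon X \rightarrow Y$ the task is simply to transport centrality (resp. symmetrizability) of $1_X$ across to $1_Y$.

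First I would treat the commutative case. Assuming $X$ is commutative, $1_X\colon X \rightarrow X$ is central, and applying Proposition~\ref{proposition: central qoutient is central} to the central morphism $1_X$ and the regular epimorphism $q$ yields that the composite $q \cdot 1_X = q$ is central. Now rewrite the same morphism as $q = 1_Y \cdot q$ and apply Corollary~\ref{corollary: qoutient commutative/abelian} with $f = 1_Y$: since $1_Y \cdot q = q$ is central, the corollary forces $1_Y$ to be central, that is, $Y$ is commutative. Alternatively, the commutative case may be obtained in one step from Corollary~\ref{corollary: X commutative if q1 commutes q2}: taking $q_1 = q_2 = q$, a cooperator $\rho$ witnessing that $1_X$ commutes with itself gives $q\rho$ as a cooperator for $q$ and $q$, whence $Y$ is commutative.

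The abelian case proceeds along the identical chain with ``central'' replaced throughout by ``symmetrizable''. If $X$ is abelian then $1_X$ is symmetrizable; the symmetrizable half of Proposition~\ref{proposition: central qoutient is central} makes $q = q \cdot 1_X$ symmetrizable, and the symmetrizable half of Corollary~\ref{corollary: qoutient commutative/abelian}, applied to $q = 1_Y \cdot q$, then makes $1_Y$ symmetrizable, so $Y$ is abelian. I do not expect a genuine obstacle here; the only point requiring attention is that the symmetrizable reflection in Corollary~\ref{corollary: qoutient commutative/abelian} is established under the regularity hypothesis, so the abelian assertion is understood in that setting, whereas the commutative assertion needs only stability of regular epimorphisms under binary products. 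The remainder is bookkeeping about the direction of composition, namely exploiting the two factorisations $q = q \cdot 1_X = 1_Y \cdot q$ to push centrality or symmetrizability first upward from $1_X$ to $q$ and then downward from $q$ to $1_Y$.
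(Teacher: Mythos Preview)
Your proposal is correct and matches what the paper intends: the corollary is stated as ``immediate'' with no proof, sitting directly after Proposition~\ref{proposition: central qoutient is central} and Corollary~\ref{corollary: qoutient commutative/abelian}, and your two-step argument (push centrality/symmetrizability from $1_X$ up to $q$, then reflect it down to $1_Y$) is precisely the intended unpacking. Your side remark that the abelian case implicitly relies on the regularity hypothesis of Corollary~\ref{corollary: qoutient commutative/abelian} is apt and worth keeping; the alternative one-step route for the commutative case via Corollary~\ref{corollary: X commutative if q1 commutes q2} is a nice simplification not spelled out in the paper.
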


\subsection{Commutative objects}
Recall that an internal \emph{unitary magma} in $\C$ is a pair $(X,\rho_X)$ making the diagram below commute.
\[
\xymatrix{
X \ar[rd]_{1_X} \ar[r]^-{(1_X,0)} & X \times X \ar[d]^{\rho_X} & \ar[l]_-{(0,1_X)} X \ar[ld]^{1_X} \\
& X
}
\]
Therefore an object is commutative if and only if it has a (necessarily unique) unitary magma structure. A morphism $f\:X \rightarrow Y$ in $\C$ where $(X, \rho_X)$ and $(Y, \rho_Y)$ are internal unitary magmas is a homorphism of internal unitary magmas if the diagram below commutes.
\[
\xymatrix{
X \times X \ar[d]_{\rho_X} \ar[r]^{f \times f} & Y \times Y \ar[d]^{\rho_Y} \\
X \ar[r]_f & Y
}
\]
\begin{proposition} \label{proposition: morphism between commutative objects}
Given any morphism $f\:X \rightarrow Y$ in $\C$ where $(X,\rho_X)$ and $(Y,\rho_Y)$ are unitary magmas, then $f$ is a morphism of unitary magmas. 
\end{proposition}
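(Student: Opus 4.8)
The plan is to recognise both composites $\rho_Y(f \times f)$ and $f\rho_X$ as cooperators for the pair $(f,f)$, and then to force them to coincide by appealing to the uniqueness of cooperators, once $f$ has been shown to be central. Concretely, the identity to be established is $\rho_Y(f\times f) = f\rho_X$. Since $X$ and $Y$ are commutative, their unitary magma structures are (by the remark preceding the statement, together with Proposition~\ref{proposition: uniqueness of cooperator}) the unique cooperators of $1_X$ with itself and of $1_Y$ with itself; in particular I shall use the unitary magma identities $\rho_Y(y,0) = y = \rho_Y(0,y)$ for generalised elements $y$ of $Y$, and $\rho_X(1_X,0) = 1_X = \rho_X(0,1_X)$.

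First I would observe that $f$ is central. Since $Y$ is commutative, $1_Y$ is central, and because $\Z(\C)$ is a right ideal (Proposition~\ref{proposition: Z right ideal}) the composite $1_Y f = f$ is again central. This is the one genuinely load-bearing step: it is exactly what licenses the appeal to uniqueness below, and everything after it is a direct verification.

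Next I would check that each of $\rho_Y(f\times f)$ and $f\rho_X$ restricts to $f$ along both product inclusions, hence is a cooperator for $f$ and $f$. For the first, using $f\circ 0 = 0$ and the magma identity for $\rho_Y$, one has $\rho_Y(f\times f)(1_X,0) = \rho_Y(f,0) = f$ and symmetrically $\rho_Y(f\times f)(0,1_X) = \rho_Y(0,f) = f$. For the second, using the magma identities for $\rho_X$, one has $f\rho_X(1_X,0) = f\cdot 1_X = f$ and $f\rho_X(0,1_X) = f\cdot 1_X = f$.

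Finally, since $f$ is central, Proposition~\ref{proposition: uniqueness of cooperator} provides a unique cooperator $\rho_{f,f}$ for $f$ and $f$; as both morphisms above are such cooperators, they must agree, yielding $\rho_Y(f\times f) = f\rho_X$, which is precisely the assertion that $f$ is a homomorphism of unitary magmas. The only real obstacle is noticing that $f$ is forced to be central by commutativity of its codomain; past that point the argument is purely formal.
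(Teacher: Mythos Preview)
Your proof is correct and follows essentially the same approach as the paper's own proof: observe that $f$ is central because $1_Y$ is, then recognise $\rho_Y(f\times f)$ and $f\rho_X$ as cooperators for $f$ with itself, and conclude equality via the uniqueness of cooperators in Proposition~\ref{proposition: uniqueness of cooperator}. You have simply spelled out the verifications in more detail than the paper does.
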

\begin{proof}
 The morphism $f$ is central (since $1_Y$ is) and the morphisms $f\rho_X$ and $\rho_Y (f \times f)$ define cooperators for $f$ with itself, so that $f\rho_X = \rho_Y (f \times f)$ by Proposition~\ref{proposition: uniqueness of cooperator}.
\end{proof}
\begin{proposition} \label{proposition: commutative object monoid}
Every commutative object in $\C$ is the underlying object of an internal commutative monoid in $\C$ (which is necessarily unique).
\end{proposition}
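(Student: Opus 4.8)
The plan is to exhibit the unitary magma structure $\rho_X\:X\times X\rightarrow X$ as the multiplication of the desired commutative monoid, with unit the zero morphism $0\:0\rightarrow X$. This unit is forced: in a pointed category the zero object is also initial, so there is exactly one morphism $0\rightarrow X$. By the discussion preceding the statement, $X$ being commutative is equivalent to the existence of this (necessarily unique) $\rho_X$, characterised as the unique cooperator for $1_X$ and $1_X$; the two unit laws of the monoid are then literally the defining equations $\rho_X(1_X,0)=1_X$ and $\rho_X(0,1_X)=1_X$ of a unitary magma. Three things remain to be checked: commutativity, associativity, and uniqueness.

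For commutativity, I would observe that precomposing with the symmetry $\tau\:X\times X\rightarrow X\times X$ produces another cooperator for the pair $(1_X,1_X)$, since $(\rho_X\tau)(1_X,0)=\rho_X(0,1_X)=1_X$ and $(\rho_X\tau)(0,1_X)=\rho_X(1_X,0)=1_X$. As $1_X$ is central, Proposition~\ref{proposition: uniqueness of cooperator} forces $\rho_X\tau=\rho_X$, which is exactly commutativity.

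Associativity is where the real work sits, and I would obtain it by specialising the computation carried out for the additive core. First note that $\rho_X$ is itself central, being the cooperator of the two central morphisms $1_X,1_X$ (Corollary~\ref{corollary: cooperator central}). Writing $L=\rho_X(\rho_X\times 1_X)\:(X\times X)\times X\rightarrow X$ and $R=\rho_X(1_X\times\rho_X)\:X\times(X\times X)\rightarrow X$, a direct check on generalised elements (using the unit laws together with $\rho_X(0,0)=0$) shows that $L$ is a cooperator for the central morphism $\rho_X$ and $1_X$, while $R$ is a cooperator for $1_X$ and $\rho_X$. Hence, by uniqueness of cooperators (Proposition~\ref{proposition: uniqueness of cooperator}), $L=\rho_{(1_X,1_X),1_X}$ and $R=\rho_{1_X,(1_X,1_X)}$ in the notation of the additive core. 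The identity $\rho_{(f,g),h}\alpha=\rho_{f,(g,h)}$ established there, with $\alpha\:X\times(X\times X)\rightarrow(X\times X)\times X$ the associativity isomorphism, evaluated at $f=g=h=1_X$ then reads $L\alpha=R$, which is precisely the associativity condition for $\rho_X$. I expect the main obstacle here to be purely organisational: one must correctly match $L$ and $R$ with the named iterated cooperators and track the associativity isomorphism so that the additive-core identity delivers the law in its standard internal form.

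Finally, for uniqueness of the monoid structure, the unit is forced as above, and the multiplication of any internal commutative monoid on $X$ satisfies the unit laws, hence is a unitary magma structure on $X$; since such a structure is unique, it must coincide with $\rho_X$. Thus the commutative monoid structure carried by a commutative object $X$ is uniquely determined.
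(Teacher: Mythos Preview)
Your argument is correct. Commutativity and uniqueness are as you say, and for associativity the specialisation of the additive-core identity $\rho_{(f,g),h}\alpha=\rho_{f,(g,h)}$ to $f=g=h=1_X$ really does yield $\rho_X(\rho_X\times 1_X)\alpha=\rho_X(1_X\times\rho_X)$, since your $L$ and $R$ are cooperators for $(\rho_X,1_X)$ and $(1_X,\rho_X)$ respectively, and both pairs have a central first member (Corollary~\ref{corollary: cooperator central}), so Proposition~\ref{proposition: uniqueness of cooperator} identifies them with the named iterated cooperators.

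The paper takes a different, more compressed route. Rather than proving commutativity and associativity separately, it invokes the Eckmann--Hilton argument: an internal unitary magma $(X,\rho_X)$ is a commutative monoid precisely when $\rho_X$ is a homomorphism of unitary magmas from $(X\times X,(\rho_X\times\rho_X)m)$ to $(X,\rho_X)$, where $m$ is the middle-interchange isomorphism. This condition then drops out of Proposition~\ref{proposition: morphism between commutative objects}, which says that \emph{every} morphism between commutative objects is a magma homomorphism. Your approach is more explicit and makes direct use of the additive-core machinery already developed; the paper's approach is shorter and highlights that the whole result is really a formal consequence of Proposition~\ref{proposition: morphism between commutative objects} together with a classical observation. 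Both rest on the same engine, namely the uniqueness of cooperators in Proposition~\ref{proposition: uniqueness of cooperator}.
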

\begin{proof}
By the classical Eckmann-Hilton argument \cite{EckmannHilton1962} a unitary magma $(X, \rho_X)$ is a commutative monoid if and only if its multiplication $\rho_X$ is a morphism of the unitary magmas $(X\times X, (\rho_X \times \rho_X)m)$ and $(X, \rho_X)$, where $m\:(X\times X)^2 \rightarrow (X\times X)^2$ is the middle interchange isomorphism $m\:((x,y),(a,b)) \mapsto ((x,a), (y,b))$. Thus the result follows by Proposition~\ref{proposition: morphism between commutative objects}. 
\end{proof}
Then Proposition~\ref{proposition: morphism between commutative objects} and Proposition~\ref{proposition: commutative object monoid} give us the following
\begin{corollary} \label{corollary: commutative objects are full}
The full subcategory $\Com(\C)$ of commutative objects in $\C$ is equivalent to the category $\CMon(\C)$ of internal commutative monoids in $\C$.
\end{corollary}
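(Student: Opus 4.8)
The plan is to exhibit the forgetful functor $U\colon \CMon(\C) \rightarrow \Com(\C)$, sending an internal commutative monoid to its underlying object and a monoid homomorphism to its underlying morphism of $\C$, and to prove it is an equivalence. First I would check that $U$ is well defined: for a commutative monoid $(X,\rho_X)$ the multiplication $\rho_X$ exhibits $X$ as a unitary magma, so its underlying object is commutative and genuinely lies in $\Com(\C)$. Faithfulness is then immediate, since a homomorphism of internal commutative monoids is a morphism of $\C$ subject to extra equations and is therefore determined by its underlying morphism.

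For essential surjectivity I would invoke Proposition~\ref{proposition: commutative object monoid}: every commutative object is the underlying object of an internal commutative monoid, so every object of $\Com(\C)$ lies in the image of $U$. The uniqueness clause of that proposition moreover shows that $U$ is injective on objects, so that $U$ is in fact a bijection on objects rather than merely essentially surjective.

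The crux is fullness. Given commutative objects $X$ and $Y$ equipped with their unique commutative monoid structures $\rho_X$ and $\rho_Y$ furnished by Proposition~\ref{proposition: commutative object monoid}, and an arbitrary morphism $f\colon X \rightarrow Y$ in $\C$, I must show that $f$ is a monoid homomorphism. Compatibility with multiplication, $f\rho_X = \rho_Y(f\times f)$, is precisely the content of Proposition~\ref{proposition: morphism between commutative objects}. Compatibility with units is automatic: in a pointed category the unit of an internal monoid is the unique morphism $0 \rightarrow X$ out of the zero object, and $f$ composed with this morphism coincides with the unit of $Y$ by the universal property of the initial object. Hence $f$ underlies a (necessarily unique) homomorphism of commutative monoids, and $U$ is full.

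Combining these observations, $U$ is fully faithful and a bijection on objects, hence an equivalence of categories. The only step requiring genuine input is fullness, and there the substance has already been supplied by Proposition~\ref{proposition: morphism between commutative objects}; the remaining verifications are formal consequences of pointedness and of the uniqueness of the commutative monoid structure.
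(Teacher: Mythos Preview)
Your proposal is correct and follows exactly the approach the paper intends: the paper merely records that the corollary follows from Proposition~\ref{proposition: morphism between commutative objects} and Proposition~\ref{proposition: commutative object monoid}, and your argument is precisely the explicit unpacking of that claim via the forgetful functor $U\colon \CMon(\C)\to\Com(\C)$, with essential surjectivity from the latter proposition and fullness from the former.
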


\begin{proposition} \label{proposition: commuative object coequaliser}
For any commutative object $X$ the diagram
\[
\xymatrix{
X  \ar@<-.5ex>[r]_-{(0,1_X)} \ar@<.5ex>[r]^-{(1_X,0)}& X \times X \ar[r]^-{\rho_X} & X
}
\]
is a coequaliser in $\C$.
\end{proposition}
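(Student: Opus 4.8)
The plan is to argue with generalised elements throughout, writing $+$ for $\rho_X$. First I would check that the diagram is a fork: by the unitary magma axioms $\rho_X(1_X,0)=1_X=\rho_X(0,1_X)$, so $\rho_X$ coequalises $(1_X,0)$ and $(0,1_X)$.

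For the universal property, let $g\colon X\times X\to Z$ satisfy $g(1_X,0)=g(0,1_X)$, that is $g(x,0)=g(0,x)$ for every generalised element $x$. The only candidate factorisation is $h=g(1_X,0)$, and this also settles uniqueness: if $h'\rho_X=g$ then $h'=h'\rho_X(1_X,0)=g(1_X,0)$. It therefore suffices to prove $g(x,y)=g(x+y,0)$ for all $(x,y)$, since this is precisely the equation $g=h\rho_X$.

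The device I would use is the auxiliary morphism $\Psi\colon X\times(X\times X)\to Z$ given by $\Psi(a,(b,c))=g(a+b,c)$, i.e.\ $g$ precomposed with $\rho_X\times 1_X$ and the associativity isomorphism of products. Then $\Psi(x,(y,0))=g(x+y,0)$ and $\Psi(x,(0,y))=g(x,y)$, using only $x+0=x$, so the goal becomes $\Psi(x,(y,0))=\Psi(x,(0,y))$ --- an equation obtained by varying the second variable of $\Psi$ with the first held fixed. I would then apply centrality in the form symmetric to Proposition~\ref{proposition: reformulations of Ax1}$(ii)$ (obtained by transposing the two factors of $\Psi$): from $\Psi(0,b)=\Psi(0,b')$ one infers $\Psi(a,b)=\Psi(a,b')$. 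Taking $b=(y,0)$ and $b'=(0,y)$, the hypothesis reads $\Psi(0,(y,0))=\Psi(0,(0,y))$, which the unitary magma axioms collapse to $g(y,0)=g(0,y)$ --- exactly the assumption on $g$.

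The main obstacle is one of packaging rather than computation: the equation to be established compares $g$ with the morphism $(x,y)\mapsto g(x+y,0)$, and this is not directly a centrality statement about a single morphism. The real content is the choice of $\Psi$ that turns this comparison into an instance of centrality, together with the observation that the hypothesis of centrality for $\Psi$ reduces exactly to $g(x,0)=g(0,x)$; once $\Psi$ is in place the remaining identities are immediate from the unitary magma axioms and from $\rho_X$ preserving the zero element.
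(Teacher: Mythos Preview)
Your argument is correct and follows essentially the same strategy as the paper: introduce an auxiliary morphism on a triple product built from $g$ and $\rho_X$, and apply the centralic axiom to transport the hypothesis $g(x,0)=g(0,x)$ to the desired identity $g(x,y)=g(x+y,0)$. The only notable difference is the grouping: the paper uses $\alpha\colon (X\times X)\times X\to Y$, $\alpha((x,y),z)=f(\rho_X(x,z),y)$, and invokes centrality in the first factor, which yields $f(\rho_X(y,x),0)=f(x,y)$ and then requires the commutativity of $\rho_X$ (available from Proposition~\ref{proposition: commutative object monoid}) to finish; your grouping $\Psi(a,(b,c))=g(a+b,c)$ with centrality in the second factor lands directly on $g(x+y,0)=g(x,y)$ without that extra step.
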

\begin{proof}
Let $f\: X \times X \rightarrow Y$ be any morphism such that $f(1_X,0) = f(0,1_X)$ then it is enough to show that the triangle in the diagram below is commutative for the existence part of the statement, where uniqueness follows since $\rho_X$ is a split epimorphism.
\[
\xymatrix{
X \ar@<-.5ex>[r]_-{(0,1_X)} \ar@<.5ex>[r]^-{(1_X,0)} & X  \times X  \ar[r]^-{\rho_X} \ar[dr]_{f} & X  \ar[d]^{f(1_X,0)} \\
& & Y
}
\]
Consider the morphism $\alpha\:(X \times X) \times X$ defined by $\alpha((x,y), z) = f(\rho_X(x,z), y)$ then we have
\[
\alpha((y,0),0) = f(y,0) = f(0,y) = \alpha((0,y),0)
\]
so that since $\C$ is centralic we have
\[
\alpha((y,0),x) = \alpha((0,y),x) \implies f(\rho_X(y, x), 0) = f(\rho_X(x,y),0) = f(x, y).
\]
\end{proof}
\subsection{Strongly centralic categories} \label{section: strongly centralic}
In \cite{GrayPhD} the following condition was defined for a pointed category $\C$ with binary products (see condition~1.1.9 in \cite{GrayPhD}).
\begin{description}
\item [$\S$] for any object $X$ and every commutative diagram
\[
\xymatrix{
X \ar@/_/[ddr]_f \ar[r]^-{(1_X,0)}& X \times X \ar[d]_{\phi} & X \ar[l]_-{(0,1_X)} \ar@/^/[ldd]^f\\
& Y \\
& W \ar[u]^m
}
\]
where $m$ is a monomorphism, there exists a morphism $\psi\:X \times X \rightarrow W$ such that $m \psi = \phi$.
\end{description}
For example every unital category satisfies $\S$ --- see Proposition~1.1.10 in \cite{GrayPhD}. As we will shortly see, there are also non-unital examples of categories satisfying $\S$.
\begin{definition}
A pointed category $\C$ is called \emph{strongly centralic} if it is centralic and satisfies the property $\S$.
\end{definition}
\begin{proposition}
Given a strongly centralic category $\C$ the subcategory $\Com(\C)$ is closed under subobjects in $\C$.
\end{proposition}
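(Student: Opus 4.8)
The plan is to transport the unitary magma structure from the commutative object downward along the monomorphism, using $\S$ to guarantee that the relevant cooperator factors through the subobject. Concretely, let $m\colon X \hookrightarrow Y$ be a monomorphism with $Y$ commutative, and let $\rho_Y\colon Y \times Y \rightarrow Y$ be the unitary magma structure witnessing commutativity of $Y$. Since (as noted before Corollary~\ref{corollary: commutative objects are full}) an object is commutative precisely when it carries a unitary magma structure, it suffices to construct such a structure on $X$.

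First I would form the composite $\phi = \rho_Y(m \times m)\colon X \times X \rightarrow Y$ and compute its restrictions along the two product inclusions. Using $\rho_Y(1_Y,0) = 1_Y = \rho_Y(0,1_Y)$ one gets $\phi(1_X,0) = m = \phi(0,1_X)$. Writing $m = m \cdot 1_X$, the data $(\phi, m, 1_X)$ now fit exactly the hypotheses of $\S$, with the monomorphism being $m$, the object $W$ being $X$, and the morphism $f$ being $1_X$. Property $\S$ then produces a morphism $\psi\colon X \times X \rightarrow X$ with $m\psi = \phi$.

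It then remains to verify that $\psi$ is a unitary magma structure on $X$, i.e.\ that $\psi(1_X,0) = 1_X = \psi(0,1_X)$. For the first equation I would compose with the monomorphism: $m\psi(1_X,0) = \phi(1_X,0) = m = m \cdot 1_X$, and cancel $m$; the second follows symmetrically. Thus $\psi$ cooperates $1_X$ with itself, $X$ is commutative, and $\Com(\C)$ is closed under subobjects.

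The main (indeed only) use of the strongly centralic hypothesis is the factorisation step via $\S$; once $\phi$ is shown to land in $W = X$ through $m$, the rest is a pair of monomorphism cancellations. Accordingly I expect the only delicate point to be the careful identification of the data in the diagram defining $\S$ --- in particular recognising that both legs of that diagram are forced to equal $m$ --- rather than any substantive computation.
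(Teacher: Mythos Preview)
Your proof is correct and follows essentially the same approach as the paper: form $\phi = \rho_Y(m\times m)$, recognise that both legs equal $m = m\cdot 1_X$, and invoke $\S$ with $W = X$ and $f = 1_X$ to obtain the unitary magma structure on $X$. The paper leaves the final verification that $\psi(1_X,0) = 1_X = \psi(0,1_X)$ implicit, whereas you spell it out via cancellation of $m$, but otherwise the arguments are identical.
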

\begin{proof}
Given a monomorphism $m:A \rightarrow X$ where $X$ is commutative, we may consider the diagram
\[
\xymatrix{
A \ar[r]^{(1_A,0)}\ar@/_2pc/[dddr]_{1_A} \ar@/_/[ddr]_m & A \times A \ar[d]^{m \times m} & A \ar@/^/[ddl]^m \ar@/^2pc/[dddl]^{1_A}  \ar[l]_{(0,1_A)} \\
 & X \times X \ar[d]|-{\rho_X} &  \\
& X \\
& A \ar[u]^m
}
\]
then condition $\S$ gives rise to the cooperator $\rho_A$ of $1_A$ with itself.
\end{proof}
Consider the following condition on pointed category $\C$ with binary products.
\begin{description}
\item[$\T$] for every object $X$ in $\C$ and any commutative diagram
\[
\xymatrix{
X \ar@/_/[dr]_f \ar[r]^-{(1_X,0)}& X \times X \ar@{->>}[d]_-{\phi} & X \ar[l]_-{(0,1_X)} \ar@/^/[ld]^f\\
& Y \\
}
\]
if $\phi$ is a regular epimorphism, then so is $f$.
\end{description}
\begin{proposition}
If $\C$ is a pointed category with binary products and coequalisers then $\T$ implies $\S$. If $\C$ has a (regular epimorphism, monomorphism)-factorisation system, then $\S$ implies $\T$.
\end{proposition}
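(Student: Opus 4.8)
The plan is to treat the two implications separately, and in both directions the engine is the orthogonality of regular epimorphisms against monomorphisms, which holds in any category.

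For the implication $\T \Rightarrow \S$, suppose we are handed the data of condition $\S$: a morphism $\phi\colon X \times X \to Y$, a monomorphism $m\colon W \to Y$, and $f\colon X \to W$ with $\phi(1_X,0) = mf = \phi(0,1_X)$. First I would form the coequaliser $c\colon X \times X \to C$ of the pair $(1_X,0)$ and $(0,1_X)$, which exists since $\C$ has coequalisers, and set $g := c(1_X,0) = c(0,1_X)$. The coequaliser $c$ is a regular epimorphism, so applying $\T$ to the diagram $X \xrightarrow{(1_X,0)} X \times X \xrightarrow{c} C$ (with its diagonal $g$ playing the role of $f$) shows that $g$ is itself a regular epimorphism; this is the crucial point where $\T$ is used. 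Because $\phi$ coequalises $(1_X,0)$ and $(0,1_X)$, it factors uniquely as $\phi = \bar\phi c$, and then $\bar\phi g = \phi(1_X,0) = mf$. This yields a commutative square with the regular epimorphism $g$ on the left and the monomorphism $m$ on the right, so the standard diagonal fill-in produces $\bar\psi\colon C \to W$ with $m\bar\psi = \bar\phi$. Setting $\psi = \bar\psi c$ gives $m\psi = \bar\phi c = \phi$, as required.

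For the implication $\S \Rightarrow \T$, suppose $\phi\colon X \times X \to Y$ is a regular epimorphism and $f = \phi(1_X,0) = \phi(0,1_X)$; the goal is to show $f$ is a regular epimorphism. Using the given (regular epimorphism, monomorphism)-factorisation system, I would factor $f = me$ with $e\colon X \to Z$ a regular epimorphism and $m\colon Z \to Y$ a monomorphism, the strategy being to prove that $m$ is an isomorphism. The hypotheses of $\S$ hold for $\phi$ together with the monomorphism $m$ and the morphism $e$, since $\phi(1_X,0) = me = \phi(0,1_X)$, so $\S$ produces $\psi\colon X \times X \to Z$ with $m\psi = \phi$. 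Since $\phi$ is a regular epimorphism it is a strong epimorphism, and applying its diagonal lifting property to the square $m\psi = 1_Y\,\phi$ forces $m$ to be a split epimorphism; being also a monomorphism, $m$ is therefore an isomorphism. Consequently $f = me$ is a regular epimorphism composed with an isomorphism, hence a regular epimorphism.

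The hard part will be recognising, in the first implication, that $\T$ should be applied not to $\phi$ directly but to the \emph{diagonal} $g$ of the coequaliser of $(1_X,0)$ and $(0,1_X)$; once $g$ is known to be a regular epimorphism everything reduces to routine fill-ins. The remaining ingredients are purely formal and need no special hypotheses on $\C$, though I would verify them explicitly: that every regular epimorphism is a strong epimorphism, and that a morphism which is simultaneously a monomorphism and a split epimorphism is an isomorphism.
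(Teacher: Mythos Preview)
Your proof is correct and follows essentially the same approach as the paper: for $\T\Rightarrow\S$ you both form the coequaliser of the two product inclusions, use $\T$ to make its diagonal a regular epimorphism, and then lift against $m$; for $\S\Rightarrow\T$ you both factor $f$ and use $\S$ to exhibit $\phi$ as factoring through the monic part, forcing that part to be an isomorphism. The paper's write-up even contains a couple of typos (e.g.\ ``$m\psi = e$'' where ``$m\psi = \phi$'' is meant) that your version avoids.
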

\begin{proof}
For $\T$ implies $\S$, consider the morphisms in the diagram of $\S$ and let $q:X \times X \rightarrow Q$ be a coequaliser of $(1_X,0)$ and $(0,1_X)$. Then $q(1_X,0)$ is a regular epimorphism by $\T$, and there exists a morphism $\alpha\: Q \rightarrow Y$ such that $mf = \alpha q$. Then the dotted arrow $\theta$ in the diagram below exists since regular epimorphisms are orthogonal to monomorphisms.
\[
\xymatrix{
X \ar@{->>}[d]_{q(1_X,0)} \ar[r]^f & W \ar[d]^m \\
Q \ar@{..>}[ru]^{\theta} \ar[r]_{\alpha} & Y
}
\]
Then defining $\psi = \theta q$ makes the condition $\S$ hold. For $\S$ implies $\T$, consider the morphisms in the diagram of $\T$ and let $me = f$ be a (regular epimorphism, monomorphism)-factorisation of $f$. Then by $\S$ there exists $\psi\:X \times X \rightarrow Y$ such that $m\psi = e$ and since $e$ is regular we have that $m$ is a strong epimorphism and is therefore an isomorphism.
\end{proof}
\subsubsection*{Commutativity of binary products and coequalisers}
Throughout this section, we suppose that $\C$ is a category with binary products and coequalisers  which commute. Further, we will suppose that $\C$ satisfies the condition $\T$. By Proposition~\ref{proposition: commutativity prod.coeq implies centralic} we have that $\C$ is strongly centralic. For each object $X$ in $\C$  consider a coequaliser $X \xrightarrow{\rho_X} Q_X$ of $(1_X,0)$ and $(0,1_X)$ which gives us the diagram:
\[
\xymatrix{
X \ar[rd]_-{q_X} \ar[r]^-{(1_X,0)} & X \times X \ar[d]^-{\rho_X} &  X \ar[l]_-{(0,1_X)} \ar[ld]^-{q_X} \\
& Q_X
}
\]
Then $\T$ gives us that $q_X$ is a regular epimorphism, and hence by Corollary~\ref{corollary: X commutative if q1 commutes q2} $Q_X$ is commutative. Then we can define a left adjoint $r\:\C \rightarrow \Com(\C)$ to the inclusion $\iota\:\Com(\C) \rightarrow \C$ by defining $r(X) = Q_X$ so that $q_X$ is a universal arrow from $\iota$ to $X$. To see this, suppose that we are given any morphism $f\:X \rightarrow A$ (where $A$ is commutative and $\rho_A$ the cooperator for $1_A$) then $\rho_A(f \times f)$ coequalises $(1_X,0)$ and $(0,1_X)$ so that the necessary $f'\:r(X) \rightarrow A$ exists. Moreover, since binary products commute with coequalisers in $\C$ we can see that the reflection $r$ preserves binary products as a consequence of the diagram
\[
\xymatrix{
X \times Y \ar@<-.5ex>[rrr]_-{(0,1_X) \times (0,1_Y)} \ar@<.5ex>[rrr]^-{(1_X,0)\times (1_Y,0)} & & & (X \times X) \times (Y \times Y) \ar[r]^-{q_X \times q_Y}  & r(X) \times r(Y)
}
\]
being a coequaliser in $\C$. Moreover by Proposition~\ref{proposition: qoutient commutative} the inclusion $\iota\:\Com(\C) \rightarrow \C$ preserves coequalisers. The proposition below summarises these remarks. 
\begin{proposition}
The subcategory $\Com(\C)$ is a Birkhoff subcategory of $\C$, i.e., it is a reflective subcategory of $\C$ which is closed under products, subobjects and regular quotients. Moreover the reflection $r\:\C \rightarrow \Com(\C)$ preserves binary products, and the inclusion $\iota\:\Com(\C) \rightarrow \C$ preserves coequalisers. 
\end{proposition}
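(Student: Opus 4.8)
The plan is to assemble the statement from the observations accumulated in the discussion above, together with the closure results of the previous subsections, after first recording two structural facts about the present setting. First, since binary products commute with coequalisers, the product of any two coequaliser diagrams is again a coequaliser; taking the second diagram to be the trivial coequaliser $0 \rightrightarrows Z \xrightarrow{1_Z} Z$ shows that $q \times 1_Z$ is a coequaliser, hence a regular epimorphism, whenever $q$ is one, so that regular epimorphisms are stable under binary products in $\C$. Second, $\C$ is strongly centralic: it is centralic by Proposition~\ref{proposition: commutativity prod.coeq implies centralic}, and it satisfies $\S$ because $\C$ has coequalisers and satisfies $\T$ (by the proposition asserting that $\T$ implies $\S$ in the presence of coequalisers). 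These two facts make the earlier machinery available in full.

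Reflectivity and the product preservation of $r$ have already been established in the discussion preceding the statement: the object $r(X) = Q_X$ is commutative by $\T$ and Corollary~\ref{corollary: X commutative if q1 commutes q2}, the map $q_X \colon X \to Q_X$ is the universal arrow exhibiting $r \dashv \iota$, and the displayed coequaliser obtained by multiplying the two defining coequaliser diagrams witnesses $r(X \times Y) \cong r(X) \times r(Y)$. I would additionally note that for commutative $X$ the map $q_X$ is an isomorphism: by Proposition~\ref{proposition: commuative object coequaliser} the multiplication $\rho_X$ is already a coequaliser of $(1_X,0)$ and $(0,1_X)$, so by uniqueness of coequalisers $r$ restricts to the identity on $\Com(\C)$.

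It then remains to verify the three closure conditions. Closure under products is immediate from the product preservation of $r$ together with $r|_{\Com(\C)} = \mathrm{id}$: for commutative $X,Y$ one has $r(X \times Y) \cong r(X) \times r(Y) \cong X \times Y$, and since $r$ takes values in $\Com(\C)$ this exhibits $X \times Y$ as commutative. Closure under subobjects is exactly the conclusion of the subobject-closure proposition for strongly centralic categories, which applies since $\C$ is strongly centralic. Closure under regular quotients is Corollary~\ref{corollary: commutative closed under qoutients}, whose hypothesis --- stability of regular epimorphisms under binary products --- was secured in the first step.

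Finally, for the preservation of coequalisers by $\iota$, I would argue as follows. Since $\Com(\C)$ is reflective in $\C$ and $\C$ has coequalisers, a coequaliser in $\Com(\C)$ of a pair $a,b \colon X \to Y$ is computed by reflecting the coequaliser $c \colon Y \to C$ formed in $\C$. But $C$ is a regular quotient of the commutative object $Y$, hence commutative by the closure under regular quotients just established; concretely, $c\rho_Y$ is a cooperator for $c$ with itself, so Corollary~\ref{corollary: X commutative if q1 commutes q2} again gives that $C$ is commutative. Thus $c$ already lies in $\Com(\C)$ and coincides with its own reflection, so the $\C$-coequaliser and the $\Com(\C)$-coequaliser agree. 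The point requiring the most care is precisely this interplay: the inclusion can preserve coequalisers only because commutativity is inherited by regular quotients, which itself rests on the stability of regular epimorphisms under products furnished by the commutativity of binary products with coequalisers.
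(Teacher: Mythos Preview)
Your proposal is correct and follows essentially the same route as the paper: the proposition in the paper is explicitly presented as a summary of the preceding discussion, which already establishes reflectivity, product preservation of $r$, and (via a terse reference to Proposition~\ref{proposition: qoutient commutative}) coequaliser preservation of $\iota$. You flesh out what the paper leaves implicit, namely the three Birkhoff closure conditions, by invoking the earlier results exactly as intended --- closure under subobjects from the strongly-centralic proposition, closure under regular quotients from Corollary~\ref{corollary: commutative closed under qoutients}, and your coequaliser-preservation argument is the standard unpacking of the paper's one-line claim. The only stylistic remark is that your argument for closure under products via $r$ preserving products is more roundabout than simply noting that the product of two unitary magmas is a unitary magma, but it is valid.
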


We also have an inclusion $\Ab(\C) \hookrightarrow \Com(\C)$ of the full subcategory of abelian objects in $\C$ to the full subcategory of commutative objects. The left adjoint $r\:\Com(\C) \rightarrow \Ab(\C)$ may be defined through the cokernel diagram
\[
X \xrightarrow{\Delta_X} X \times X \xrightarrow{q} r(X).
\]
where the unit $\eta$ of the adjunction has $X$ component $\eta_X = q(1_X,0)$.
\begin{proposition}
 The category $\Ab(\C)$ is a reflective subcategory of $\Com(\C)$.
\end{proposition}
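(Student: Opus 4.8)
The plan is to verify that the stated construction exhibits $\eta_X = q(1_X,0)$ as a universal arrow from $X \in \Com(\C)$ to the inclusion $\Ab(\C) \rightarrow \Com(\C)$. Via the equivalence $\Com(\C) \simeq \CMon(\C)$ of Corollary~\ref{corollary: commutative objects are full} I would read $X$ as an internal commutative monoid $(X,\rho_X)$ and recognise the cokernel $q\:X \times X \rightarrow r(X)$ of the diagonal $\Delta_X$ as the internal Grothendieck group completion: $q$ sends a generalised element $(a,b)$ to the formal difference ``$a - b$'' and $\eta_X$ sends $a$ to $q(a,0)$. Thus there are two things to establish, namely that $r(X)$ is abelian and that $\eta_X$ has the required universal property; throughout I would argue with generalised elements, justified by the Metatheorem cited in the conventions.

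First I would show $r(X) \in \Ab(\C)$. Since $\Com(\C)$ is closed under finite products, $X \times X$ is commutative, and as $q$ is a coequaliser (hence a regular epimorphism) Corollary~\ref{corollary: commutative closed under qoutients} gives that $r(X)$ is commutative. To upgrade commutativity to abelianness I would produce the inversion: the swap automorphism $\sigma\:X \times X \rightarrow X \times X$, $(a,b) \mapsto (b,a)$, satisfies $\sigma\Delta_X = \Delta_X$, so $q\sigma\Delta_X = q\Delta_X = 0$ and $q\sigma$ factors through the cokernel $q$, yielding a morphism $s_{r(X)}\:r(X) \rightarrow r(X)$. Since $q$ is a homomorphism of internal monoids and the multiplication on $X \times X$ is componentwise, a generalised-element computation then gives $q(a,b) + q(b,a) = q(a+b,a+b) = q\Delta_X(a+b) = 0$, so $s_{r(X)}$ provides a two-sided inverse making $(r(X),\rho_{r(X)})$ an internal abelian group; hence $r(X)$ is abelian by the bulleted characterisation of symmetrizable morphisms.

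Next I would check the universal property. Given an abelian object $A$ with inversion $s_A$ and any morphism $f\:X \rightarrow A$ in $\Com(\C)$ --- automatically a homomorphism of internal monoids by Proposition~\ref{proposition: morphism between commutative objects} --- the morphism $g\:X \times X \rightarrow A$ defined by $g(a,b) = f(a) + s_A f(b)$ satisfies $g\Delta_X = 0$, so it factors uniquely through $q$ as $\bar f\:r(X) \rightarrow A$, and $\bar f \eta_X = \bar f q(1_X,0) = g(1_X,0) = f$. For uniqueness I would use that any homomorphism $\phi\:X \times X \rightarrow A$ of internal monoids satisfies $\phi(a,b) = \phi(a,0) + \phi(0,b)$, so it is determined by its restrictions along $(1_X,0)$ and $(0,1_X)$: if $h\:r(X) \rightarrow A$ satisfies $h\eta_X = f$ then $hq(1_X,0) = f$, while $q(0,1_X) = s_{r(X)}\eta_X$ and $h$ preserves inversion give $hq(0,1_X) = s_A f = \bar f q(0,1_X)$, forcing $hq = \bar f q$ and hence $h = \bar f$ since $q$ is an epimorphism.

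The main obstacle I anticipate is the second step --- genuinely verifying that $r(X)$ is \emph{abelian} rather than merely commutative --- since it requires descending the swap map through the cokernel and confirming the group identity $q(a,b) + q(b,a) = 0$ internally. Everything downstream, namely the existence and uniqueness of $\bar f$, is then a routine consequence of the cokernel's universal property together with the fact that a homomorphism out of $X \times X$ is determined by its two inclusions; the only care needed is to ensure the generalised-element manipulations are legitimate, which is exactly what the cited Metatheorem guarantees.
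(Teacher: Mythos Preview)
Your proof is correct and follows essentially the same construction and strategy as the paper: both take $r(X)$ to be the cokernel of $\Delta_X$, use closure of $\Com(\C)$ under regular quotients to see $r(X)$ is commutative, and exploit the swap on $X\times X$ together with the computation $q(a,b)+q(b,a)=q\Delta_X(a+b)=0$ to obtain abelianness. The only cosmetic difference is that the paper phrases this last step as ``$q$ is symmetrizable with inverse $qi$'' and then invokes Corollary~\ref{corollary: qoutient commutative/abelian} to reflect symmetrizability along the regular epimorphism $q$, whereas you descend the swap to an explicit inversion $s_{r(X)}$ and verify the group axiom directly; your route is slightly more self-contained, while the paper's leans on the already-established machinery. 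The paper leaves the universal property as ``routinely checked'', and your explicit verification of existence and uniqueness of $\bar f$ is exactly the routine check one would expect.
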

\begin{proof}
Suppose that $X$ is commutative and let $q\:X \times X \rightarrow Q$ be a cokernel of the diagonal $\Delta_X\:X \rightarrow X \times X$ then $Q$ is commutative by Corollary~\ref{corollary: commutative closed under qoutients}. Then $q$ is central, and  if $\rho_{X \times X}$ and $\rho_Q$ are the additions of the respective internal commutative monoid structures on $X \times X$ and $Q$, then $q$ is a morphism of internal commutative monoids (Proposition~\ref{proposition: morphism between commutative objects}). Let $i\:X\times X \rightarrow X$ be the interchange isomorphism $(x,y) \mapsto (y,x)$ it is then easy to see that $q$ is symmetrizable with inverse $qi$, so that $Q$ is abelian by Corollary~\ref{corollary: qoutient commutative/abelian}. Then we define $r\:\Com(\C) \rightarrow \Ab(\C)$ as $r(X) = Q$ and let $\eta_X\:X \rightarrow r(X)$ be the composite $q(1_X,0)$, then $\eta_X$ is routintely checked to be universal from $\iota:\Com(\C) \rightarrow \Ab(\C)$.
\end{proof}
Every unital category is strongly centralic (see condition~1.1.9 in \cite{GrayPhD}), so that the above results apply to regular unital categories with coequalisers. Interestingly, there are categorical contexts far outside of the unital setting which are strongly centralic. For instance, consider a pointed variety $\V$ of algebras which admits a $4$-ary term $m$ satisfying the equations
\begin{align*}
m(x,x,y,0) = x, \\
m(0,y,y,y) = y, \\
m(y,x,y,0) = y.
\end{align*}
For instance if $\V$ is a unital variety, i.e., $\V$ admits a binary operation $+$ satisfying $x + 0 = x = 0 + x$ then we may define $m(x,y,z,w) = x + w$. Or if $\V$ admits a majority term $p(x,y,z)$ (see the equations preceding Proposition~\ref{proposition: example majority category}), then we could define $p(x,y, z)$ then we could define $m(x,y,z,w) = p(x,y,z)$. We claim that $\V$ is strongly centralic. To see that it is centralic, suppose that $\theta$ is an equivalence relation on a product of algebras $X\times X$ in $\V$, and suppose that we are given $(x,0) \theta (y,0)$ then $\theta$ gives us
\begin{align*}
(x,0) \theta (y,0), \\
(x,z) \theta (x,z), \\
(y,z) \theta (y,z), \\
(0,z) \theta (0,z).
\end{align*}
Applying the operation $m$ component-wise above, gives us that $(x,z) \theta (y,z)$. To see that this variety satisfies $\T$: consider the diagram
\[
\xymatrix{
X \ar@/_/[dr]_f \ar[r]^-{(1_X,0)}& X \times X \ar@{->>}[d]_-{\phi} & X \ar[l]_-{(0,1_X)} \ar@/^/[ld]^f\\
& Y \\
}
\]
then we have that
\begin{align*}
\phi(x,y) &= \phi(m((0,y),(x,0),(x,y), (x,0)) \\
&= m(\phi(0,y),\phi(x,0),\phi(x,y),\phi(x,0)) \\
&= m(\phi(0,y),\phi(x,y),\phi(0,x),\phi(0,y)) = \phi(0,m(x,y,x,y)) = f(m(x,y,x,y)),
\end{align*}
so that $f$ is surjective since $\phi$ is. 
\begin{proposition}
Every pointed factor permutable category is strongly centralic.
\end{proposition}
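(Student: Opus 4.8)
The goal is to show that a pointed factor permutable category $\C$ is centralic and satisfies $\S$, since strongly centralic means exactly this. I would treat the two requirements separately: centrality by a shifting-lemma argument, and $\S$ by the categorical analogue of the computation that precedes this proposition for the variety with a $4$-ary term $m$. Note that no appeal to the equivalence of $\S$ and $\T$ is needed (and would in any case be unavailable, since the $\T \Rightarrow \S$ direction requires coequalisers, which a factor permutable category need not possess).

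For centrality I would argue exactly as in Remark~\ref{remark: factor permutable}. Given $f\:X \times Y \rightarrow Z$ with $f(x,0)=f(x',0)$, form the configuration of Proposition~\ref{proposition: example pointed gumm} with $R=\Eq(\pi_1)$, $S=\Eq(\pi_2)$ and $T=\Eq(f)$. Here $R\cap S = \Delta_{X\times Y}\leqslant T$ holds trivially, and both $R$ and $S$ are kernel pairs of product projections, so the weak shifting lemma (Lemma~2.5 of \cite{Gra04}) applies and delivers $(x,y)\,T\,(x',y)$, that is $f(x,y)=f(x',y)$.

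The substantive step is $\S$, and the key is the following image identity: for any $\phi\:X\times X\rightarrow Y$ with $\phi(1_X,0)=\phi(0,1_X)$ one has $\mathrm{im}(\phi)=\mathrm{im}(\phi(1_X,0))$. Granting this, $\S$ is immediate, since in the defining diagram $\phi(1_X,0)=mf$ factors through the monomorphism $m$, so $\mathrm{im}(\phi)=\mathrm{im}(\phi(1_X,0))\leqslant W$ and hence $\phi$ itself factors through $m$, producing the required (and, as $m$ is monic, unique) $\psi$. To prove the identity I would apply factor permutability to the projection $\pi_2\:X\times X\rightarrow X$ and the effective equivalence relation $E=\Eq(\phi)$, giving $\Eq(\pi_2)\circ E=E\circ\Eq(\pi_2)$. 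Writing $M=\mathrm{im}(1_X,0)$, which is precisely the kernel of $\pi_2$ and hence $\Eq(\pi_2)$-saturated, and $\bar M$ for the $E$-saturation of $M$, the claim is that $\bar M=X\times X$, which is exactly $\mathrm{im}(\phi)=\phi(M)=\mathrm{im}(\phi(1_X,0))$. First, permutability makes $\bar M$ itself $\Eq(\pi_2)$-saturated: if $q\in M$, $q\,E\,p$ and $p\,\Eq(\pi_2)\,p'$, then permuting the composite yields $q'$ with $q\,\Eq(\pi_2)\,q'\,E\,p'$, and $q'\in M$ since it shares the second coordinate $0$ of $q$. Second, every generalised element $(x,y)$ lies in $\bar M$ via the chain $(x,y)\,\Eq(\pi_2)\,(0,y)\,E\,(y,0)$ with $(y,0)\in M$, using $\phi(0,y)=\phi(y,0)$. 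Together these give $\bar M=X\times X$.

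The main obstacle is one of rigour rather than idea: the saturation argument of the previous paragraph is written with generalised elements and should be read through the relational calculus of a regular category, the $E$-saturation of $M$ being the inverse image $\phi^{-1}\phi(M)$ computed as the image of $M$ along the relation $\Eq(\phi)$. This is legitimate by the metatheorem invoked in the conventions, and every other ingredient reduces to the permutability of $\Eq(\pi_2)$ with kernel pairs, which is exactly factor permutability, and to the triviality $R\cap S\leqslant T$ feeding the weak shifting lemma.
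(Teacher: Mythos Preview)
Your argument is correct and is at heart the same as the paper's: both hinge on using factor permutability of a product projection with $\Eq(\phi)$ to show that every value $\phi(x,y)$ is already a value of $f$. The paper packages this as a verification of $\T$ rather than $\S$: given $(x,y)$ it runs the chain $(x,y)\,\Eq(\pi_1)\,(x,0)\,\Eq(\phi)\,(0,x)$ and permutes to produce $z$ with $\phi(x,y)=\phi(0,z)=f(z)$, which is your image identity in one line, without the intermediate saturation apparatus. Your remark that the paper's stated implication $\T\Rightarrow\S$ assumes coequalisers is accurate, and going straight for $\S$ is therefore tidier; note, however, that the paper's element chase never actually uses the hypothesis that $\phi$ is a regular epimorphism, so it already proves $\mathrm{im}(\phi)=\mathrm{im}(f)$ and hence $\S$, and in any case $\T\Rightarrow\S$ holds in every regular category (factor $\phi=m'e$, apply $\T$ to $e$ to make $e(1_X,0)$ a regular epimorphism, then lift against the mono $m$ by orthogonality), so the gap you worry about is not genuinely present.
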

\begin{proof}
By Remark~\ref{remark: factor permutable} if $\C$ is factor permutable then $\C$ is centralic. To see that it satisfies $\T$, consider the morphisms in the statement of $\T$. To see how $f$ is surjective let $(x,y) \in X \times X$ be any element. Then factor permutability gives a $z \in X$ such that the diagram of relations:
\[
\xymatrix{
(x,y) \ar@{-}[r]^{\pi_1} \ar@{..}[d]_{\phi} & (x,0) \ar@{-}[d]^{\phi}\\
(0,z)  \ar@{..}[r]_{\pi_1} & (0,x)
}
\]
holds. Then $f(z) = \phi(0,z) = \phi(x,y)$ --- so that $f$ is surjective.
\end{proof}

\section*{Concluding remarks}
We leave the investigation of what may be called \emph{locally (strongly) centralic} categories, i.e., finitely complete categories $\C$ where every category of points $\Pt_{\C}(X)$ is centralic (or strongly centralic), and what relation these categories have to the results of the papers \cite{Gra04, BournGran2004}, for a future work. We remark here that varieties of algebras which are locally centralic in this sense have been characterised in \cite{Hoefnagel2019b} by means of H.~P.~Gumm's shifting lemma: a variety $\V$ is locally centralic if and only if for any morphisms $p:A \rightarrow X$ and $q:B \rightarrow X$ in $\V$ and any congruence $\Theta$ on $A \times_X B$ we have $(x,u) \Theta (y, u) \Rightarrow (x,v) \Theta (y, v)$ for any elements $(x,u), (y, u), (x,v), (y, v)$ of $A \times_X B$. This can be seen as a special case of the shifting lemma:
\[
\xymatrix{
	(x,u) \ar@/^1.5pc/@{-}[r]^{\Theta} \ar@{-}[r]^{\Eq(p_2)} \ar@{-}[d]_{\Eq(p_1)} & (y,u) \ar@{-}[d]^{\Eq(p_1)} \\
	(x,v) \ar@/_1.5pc/@{..}[r]_{\Theta} \ar@{-}[r]_{\Eq(p_2)} & (y,v) 
}
\]
This observation was the main motivation behind this paper. We also remark here that some of the results in this paper took inspiration from the paper \cite{Bourn2021} --- see Proposition 2.2 (and its proof) therein. Note that categorical conditions similar to some of the equivalent statements of Proposition~\ref{proposition: reformulations of Ax1} have been investigated in  \cite{HoefnagelJanelidze-Nullital} as well as \cite{Hoefnagel2021}.

We also remark that the defining property of a finitely (bi)complete category to be centralic may be reformulated  as an exactness property as it is defined in \cite{Jacqmin2021} (see section 6.3 therein). Also, the general commutativity of a specified limit with a specified colimit is an exactness property amenable to the results of the paper \cite{JanelidzeJacqmin2021}.

\end{document}